\documentclass{amsart}

\usepackage[all]{xy}    
\usepackage{graphicx}
\usepackage{amssymb}
\usepackage{mathrsfs}
\usepackage{multirow}
\usepackage{float}
\usepackage{tikz-cd}
\usepackage{adjustbox}
\usepackage{amsthm}
\usepackage{accents}
\usepackage{mathtools}
\usepackage{enumitem,cite,array}
\usepackage[new]{old-arrows}
\usepackage{tikz}
\usetikzlibrary{matrix,arrows}
\usepackage{amsmath}

\usepackage[numbers,sort&compress]{natbib} 
\usepackage[bookmarksnumbered, bookmarksopen,
colorlinks,citecolor=blue,linkcolor=blue,backref]{hyperref}

\newcounter{RomanNumber}

\newtheorem{theorem}{Theorem}
\newtheorem{lemma}[theorem]{Lemma}

\theoremstyle{definition}

\newtheorem{example}[theorem]{Example}
\newtheorem{proposition}[theorem]{Proposition}

\theoremstyle{notation}



\newcommand{\be}{\begin{equation}}
\newcommand{\ee}{\end{equation}}

\usepackage[utf8]{inputenc}
\usepackage{chngcntr}
\usepackage{apptools}
\AtAppendix{\counterwithin{theorem}{section}}
\usepackage[toc,page]{appendix}

\newcommand{\qqed}{\hfill\Box}

\begin{document}

\keywords{Coformal space, Sullivan model, fibration, cofibration, Koszul space, rational LS category}

\title
{Coformality around fibrations and cofibrations}


 \author{Ruizhi Huang}
\address{Institute of Mathematics and Systems Sciences, Chinese Academy of Sciences, Beijing 100190, China}
\email{huangrz@amss.ac.cn}  
\urladdr{https://sites.google.com/site/hrzsea
}
\thanks{}

\date{}

\maketitle

\begin{abstract}
We show that in a fibration the coformality of the base space implies the coformality of the total space under reasonable conditions, and these conditions can not be weakened. The result is partially dual to the classical work of Lupton \cite{Lup} on the formality within a fibration. Our result has two applications. First, we show that for certain cofibrations, the coformality of the cofiber implies the coformality of the base. Secondly, we show that the total spaces of certain spherical fibrations are Koszul in the sense of Berglund \cite{Ber}.
\end{abstract}

\tableofcontents


\section*{Introduction}
\subsection{Background}
\label{subsec: backgroundintro}
Let $X$ be a simply connected space of finite type. $X$ is rationally {\it formal} if its rational homotopy type is determined by the graded commutative algebra $H^\ast(X;\mathbb{Q})$; and is rationally {\it coformal} if its rational homotopy type is determined by the graded Lie algebra $\pi_\ast(\Omega X)\otimes \mathbb{Q}$. Formality and coformality are important properties in rational homotopy theory. The classical work of Deligne, Griffiths, Morgan and Sullivan \cite{DGMS} shows that compact K\"{a}hler manifolds are formal. Coformality was first introduced by Neisendorfer and Miller \cite{NM} as a dual concept of formality. Both of them provides building blocks for rational homotopy types from different points of view. Recently, Berglund \cite{Ber} combined them together to study Koszul structure in rational homotopy theory.

To characterize the formality of spaces in a homotopy fibration is a classical problem. Lupton \cite{Lup}, and Amann and Kapovitch \cite{AK} proved deep results on the relation between the formality of the total space  of a fibration and that of the base space under reasonable conditions. In this paper, we study the same question for coformality, that is, to investigate the coformality of the base space and that of the total space in a fibration.

\subsection{Coformality}
\label{subsec: coformalintro}
When studying formality around fibrations, it is common to impose a so-called TNCZ condition on the fibration, for instance in \cite[Proposition 3.2]{Lup} and \cite[Theorem A, Corollary B]{AK}. To be precise, a fibration $F\stackrel{i}{\hookrightarrow} E\stackrel{}{\rightarrow} B$ is called {\it rationally totally non-cohomologous to zero (TNCZ)}, if the induced homomorphism $i^\ast: H^\ast(E;\mathbb{Q})\rightarrow H^\ast(F;\mathbb{Q})$ is surjective. In order to study coformality around fibration, we may wish to impose a dual condition of TNCZ. For this, let us call the fibration is {\it rationally totally non-homotopical to zero (TNHZ)}, if the induced homomorphism $i_\ast: \pi_\ast(F)\otimes \mathbb{Q}\rightarrow \pi_\ast(E)\otimes \mathbb{Q}$ is injective. The condition TNCZ is equivalent to that the $E_2$-term of the Serre spectral sequence of the fibration collapses, while the condition TNHZ is equivalent to that the long exact sequence of the homotopy groups of the fibration splits rationally.

To study coformality, we introduces a notion of coformal limit associated to any simply connected space. More precisely, let $X$ be a simply connected topological space with a minimal Sullivan model $(\Lambda V, d)$. We define the {\it coformal limit} $X^\prime$ of $X$ to be the geometric realization of the purely quadratic Sullivan algebra $(\Lambda V, d_1)$ with the differential $d_1$ being the quadratic part of $d$. 
In Section \ref{sec: limit}, we show that coformal limit is well defined up to rational homotopy equivalence and is a covariant functor. Moreover, a space $X$ is coformal if and only if it is rational homotopy equivalent to its limit $X^\prime$. This justifies the terminology.

We can now state our first theorem in this paper. Denote by $cat_0(X)$ the {\it rational LS category} of a space $X$.
Recall $X$ is called of {\it finite type} if $H_i(X;\mathbb{Q})$ is finite dimensional for any $i\in \mathbb{Z}$. 
\begin{theorem}\label{coformalmainthm}
Let 
\[
F\stackrel{}{\longhookrightarrow} E\stackrel{}{\longrightarrow} B
\]
be a fibration of simply connected topological spaces of finite type. Suppose the fibration is TNHZ, and $F$ is  coformal. If $B$ is coformal and $cat_0(E^\prime)\leq 2$, then $E$ is coformal. 
\end{theorem}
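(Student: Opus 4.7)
The plan is to analyze the minimal Sullivan model of $E$ through the relative Koszul-Sullivan extension provided by the fibration, and then show that its differential is gauge-equivalent to its purely quadratic part.

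First, I would fix a relative minimal Sullivan model
\[
(\Lambda V_B, d_B) \hookrightarrow (\Lambda V_B\otimes \Lambda V_F, D) \twoheadrightarrow (\Lambda V_F, \bar d_F)
\]
of the fibration, with $(\Lambda V_B, d_B)$ and $(\Lambda V_F, \bar d_F)$ the minimal Sullivan models of $B$ and $F$. The TNHZ hypothesis amounts to the vanishing of the rational connecting homomorphism $\partial\colon \pi_{\ast}(B)\otimes\mathbb{Q}\to \pi_{\ast-1}(F)\otimes\mathbb{Q}$, which dually says that the linear part of $D$ restricted to $V_F$ has no component in $V_B$. Hence $(\Lambda V_B\otimes \Lambda V_F, D)$ is itself the absolute minimal Sullivan model of $E$, and $V_E\cong V_B\oplus V_F$ as graded vector spaces.

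Next, I would use the coformality of $B$ and $F$, together with the functoriality of the coformal limit established in Section \ref{sec: limit}, to arrange that both $d_B$ and $\bar d_F$ are purely quadratic. Decomposing $D=\sum_{k\geq 2}D_k$ by word length, this gives $D_k|_{V_B}=0$ for $k>2$, and the projection of $D_k|_{V_F}$ to $\Lambda V_F$ vanishes for $k>2$. Consequently the only obstructions to a quasi-isomorphism $(\Lambda V_E, D)\simeq (\Lambda V_E, D_2)$, where the right-hand side is a model for $E'$, are the mixed terms $D_k v \in \Lambda^{\geq 1}V_B\otimes \Lambda V_F$ for $v\in V_F$ and $k\geq 3$. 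Coformality of $E$ amounts to trivializing these obstructions.

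Finally, I would deploy the hypothesis $cat_0(E')\leq 2$ to eliminate these obstructions. A standard characterization of rational LS category furnishes, for the coformal minimal Sullivan algebra $(\Lambda V_E, D_2)$ of cat at most $2$, a multiplicative retraction (up to DGA homotopy) of $(\Lambda V_E, D_2)$ onto the nilpotency-$2$ quotient $(\Lambda V_E/\Lambda^{>2}V_E, \bar D_2)$. Combined with the TNHZ splitting $V_E=V_B\oplus V_F$, this retraction should power an inductive perturbation: at each stage the retraction produces a primitive trivializing the obstruction $D_k$ as a $D_2$-cocycle, which I can absorb into a strict automorphism of $\Lambda V_E$ that is the identity modulo word length $k$. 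Iterating and passing to the colimit in the word-length filtration yields a DGA isomorphism carrying $D$ to $D_2$, hence a quasi-isomorphism $(\Lambda V_E, D)\simeq (\Lambda V_E, D_2)$, so $E$ is coformal.

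The main obstacle will be this perturbation step: ensuring that the retraction produced from $cat_0(E')\leq 2$ is compatible with the $V_B$/$V_F$ splitting so that each obstruction cocycle is an actual boundary, and that the iterative elimination converges in the word-length filtration. The sharpness announced in the paper's introduction indicates that, without the $cat_0(E')\leq 2$ bound, the higher obstruction classes for coformality can be non-trivial in general, so this is exactly where the numerical hypothesis must be spent.
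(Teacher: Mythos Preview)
Your strategy is correct and matches the paper's: build the relative minimal model, use TNHZ to make it the absolute minimal model of $E$, use coformality of $B$ and $F$ to reduce the problem to the mixed higher terms of $D$ on $V_F$, and kill these inductively via the category bound. Two points where the paper's execution is cleaner than your sketch.

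\emph{How $cat_0(E')\leq 2$ is spent.} Rather than the F\'elix--Halperin retraction, the paper passes through the Toomer invariant: since $E'$ is coformal one has $e_0(E')=cat_0(E')\leq 2$, which by definition says the projection $(\Lambda V_E,D_2)\to (\Lambda V_E/\Lambda^{>2}V_E,\bar D_2)$ is injective on cohomology. Hence every $D_2$-cocycle of word length $\geq 3$ is automatically a $D_2$-coboundary, and no further extraction of primitives from a retraction is needed.

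\emph{Induction scheme.} The paper inducts on \emph{degree}, not on word length. At the lowest degree $n$ where $D\neq D_2$ on $V_F$, every generator appearing in $Dv$ has degree $<n$, so $D$ and $D_2$ already agree there; this gives immediately that the discrepancy $\theta(v)=(D-D_2)(v)$ is a $D_2$-cocycle (as an element) of word length $\geq 3$, hence $\theta(v)=D_2 z=Dz$ for some $z\in\Lambda^{\geq 2}V_E$, and the change of generators $v\mapsto v-z$ makes $D$ quadratic through degree $n$. This avoids the derivation-complex formalism your sketch gestures at.

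Finally, your stated ``main obstacle'' about compatibility with the $V_B/V_F$ splitting is a red herring: one only needs the primitive $z$ to have word length $\geq 2$ so that $v\mapsto v-z$ is an automorphism of the minimal model, and this is automatic since $\theta(v)\in\Lambda^{\geq 3}V_E$ and $D_2$ raises word length by one.
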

Lupton \cite[Proposition 3.2]{Lup} showed that if a fibration is TNCZ and the fiber is formal and elliptic, then the formality of the base implies the formality of the total space. Despite the elliptic and category conditions, Theorem \ref{coformalmainthm} is dual to the result of Lupton. 

Under further conditions on the fiber, Amann and Kapovitch \cite{AK} showed the formality of the total space implies the formality of the base. However, this is not case for coformality. Indeed, in Example \ref{counterex2} we constructed a fibration $S^3\rightarrow E\rightarrow \mathbb{C}P^2$ such that $E$ is coformal with $cat_0(E^\prime)=2$. But since $\mathbb{C}P^2$ is not coformal, this disproves the guess that the coformality of the total space implies the coformality of the base. 

Nevertheless, the coformality of the total space always implies the coformality of the fiber as proved in Proposition \ref{coformalprop3}. In addition, the TNHZ condition is necessary as illustrated by a fibration $S^2\rightarrow \mathbb{C}P^3\rightarrow S^4$ in Example \ref{counterex1}. Moreover, the category inequality condition is optimal. In Example \ref{counterex3}, we construct a rational TNHZ fibration $S^9\rightarrow E\rightarrow S^2\times S^2$ such that $E$ is not coformal and $cat_0(E^\prime)=3$.

The following is an example of Theorem \ref{coformalmainthm}.
\begin{example}\label{coformalfibex1intro}
Let $F\stackrel{i}{\hookrightarrow} E\stackrel{}{\rightarrow} B$
be a fibration where $F$ is an odd dimensional sphere and $B$ is a simply connected wedge of spheres of finite type. It is clear that $B$ and $F$ are coformal. Moreover, it is easy to see that the coformal limit functor gives a firation $F\rightarrow E^\prime\rightarrow B$ (\ref{E'fibeq}), and then by \cite[Proposition 30.7 (ii)]{FHT} $cat_0(E^\prime)\leq cat_0(B)+cat_0(F)=2$. Hence if $i_\ast$ is rationally nontrivial, then $E$ is coformal. 
\end{example}

Theorem \ref{coformalmainthm} can be applied to study coformality around cofibration. 
Let $\Sigma A\stackrel{f}{\longrightarrow} Y \stackrel{h}{\longrightarrow} Z$ be a homotopy cofibration. The map $f$ is called {\it inert} if $h$ is surjective in rational homotopy. Notice that when $A=S^{k}$ for some $k$, this is the classical notion of inert map. We can now state our second theorem.
\begin{theorem}\label{coformalmainthm2}
Let
\[
\Sigma A\stackrel{f}{\longrightarrow} Y \stackrel{}{\longrightarrow} Z
\]
be a homotopy cofibration of simply-connected spaces of finite type such that $f$ is inert. If $Z$ is coformal and $cat_0(Y^\prime)\leq 2$, then $Y$ is coformal.
\end{theorem}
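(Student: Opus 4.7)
The plan is to reduce the statement to Theorem~\ref{coformalmainthm} by converting the given cofibration into an associated fibration. Let $F$ denote the homotopy fiber of $h\colon Y \to Z$; we thus obtain a homotopy fibration
\[
F \longrightarrow Y \stackrel{h}{\longrightarrow} Z.
\]
By definition, $f$ being inert is equivalent to $h_\ast\colon \pi_\ast(Y) \otimes \mathbb{Q} \to \pi_\ast(Z) \otimes \mathbb{Q}$ being surjective. In the long exact sequence of homotopy groups of this fibration, this forces each connecting map to vanish rationally, so the sequence splits into short exact sequences
\[
0 \longrightarrow \pi_\ast(F) \otimes \mathbb{Q} \longrightarrow \pi_\ast(Y) \otimes \mathbb{Q} \longrightarrow \pi_\ast(Z) \otimes \mathbb{Q} \longrightarrow 0,
\]
which is precisely the TNHZ condition. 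Since $Y$ and $Z$ are simply connected of finite type, so is $F$.

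The key step is to verify that $F$ is coformal. The idea is to identify $F$ rationally by exploiting the cofibration structure. For an inert cofibration $\Sigma A \to Y \to Z$, a Ganea--Halperin--Lemaire-type comparison of the path-loop fibration over $Z$ with the pushout defining $Z$ from $Y$ and $\Sigma A$ yields a rational equivalence
\[
F \simeq_{\mathbb{Q}} \Sigma A \ltimes \Omega Z \;=\; \Sigma A \vee \Sigma(A \wedge \Omega Z),
\]
which is a suspension. Any suspension is rationally a wedge of spheres, and wedges of spheres are coformal (their minimal Sullivan model has purely quadratic differential). Hence $F$ is coformal.

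Having verified TNHZ, coformality of $F$, coformality of $Z$ (given), and $cat_0(Y^\prime) \leq 2$ (given), all four hypotheses of Theorem~\ref{coformalmainthm} hold for the fibration $F \to Y \to Z$, and we conclude that $Y$ is coformal. The main obstacle is the rational identification of $F$: it genuinely uses the inert hypothesis (without which $F$ need not be a suspension, as the case $Y = \mathrm{pt}$, $\Sigma A = S^1$, $Z = S^2$ already shows), and it is most naturally extracted as a separate lemma, proved either by a pushout--pullback diagram chase \`a la Ganea or by a relative Sullivan-model argument showing that $\pi_\ast(\Omega F) \otimes \mathbb{Q}$ is a free graded Lie algebra.
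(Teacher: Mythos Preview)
Your proposal is correct and follows essentially the same route as the paper: convert the cofibration into the fibration $F\to Y\stackrel{h}{\to}Z$, identify the homotopy fiber $F$ rationally as $\Sigma A\vee(\Sigma A\wedge\Omega Z)$ (hence a wedge of spheres, hence coformal), observe that inertness of $f$ is exactly TNHZ, and then invoke Theorem~\ref{coformalmainthm}. The only difference is bibliographic: the paper obtains the rational identification of $F$ by quoting a result of Beben--Theriault (Theorem~\ref{BTtheorem}), whereas you sketch it as a Ganea/Halperin--Lemaire-style argument; the content is the same, and your acknowledgment that this step deserves a separate lemma is well placed.
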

It is known that a connected sum of products of two simply connected spheres is coformal. This is an example of Theorem \ref{coformalmainthm2} discussed in Example \ref{coformalfibex4}. Here we provide two more examples.

\begin{example}\label{coformalfibex2intro}
Let $M$ be a closed simply connected $n$-manifold whose rational cohomology algebra is not generated by one single class. Let $Y=M-\{{\rm pt}\}$ be the {\it deleted manifold}. There is the canonical homotopy cofibration
\[
S^{n-1}\stackrel{f}{\longrightarrow} Y\stackrel{}{\longrightarrow} M,
\]
where $f$ is the attaching map of the top cell of $M$.
By \cite{HL} it is known that $f$ is inert. Hence if $M$ is coformal and $cat_0(Y^\prime)\leq 2$, then the deleted manifold $Y$ is coformal.
\end{example}

\begin{example}\label{coformalfibex3intro}
For $n\geq 3$ and $n\neq 4, 8$, let $M$ be a $(n-1)$-connected closed $2n$-manifold such that $H^n(M;\mathbb{Z})\cong \mathbb{Z}^{d+2}$ with $d\geq 0$. In \cite[Example 4.2]{BT}, Beben-Theriault showed that there is a rational homotopy cofibration
\[
\bigvee_{i=1}^{d} S^n \stackrel{f}{\longrightarrow} M \stackrel{}{\longrightarrow} S^n\times S^n,
\]
such that $f$ is inert. Hence if $cat_0(M^\prime)\leq 2$, then $M$ is coformal.
\end{example}

\subsection{Koszul property}
\label{subsec: koszulintro}
Theorem \ref{coformalmainthm} can be applied to study the so called {\it Koszul property} introduced by Berglund \cite{Ber}. Indeed, he \cite{Ber} defined the notion of {\it Koszul space} as geometric realization of graded commutative {\it Koszul algebra} and showed that a space is Koszul if and only if it is both formal and coformal. 

Recall a simply connected topological space $X$ is {\it elliptic} if both $H^\ast(X;\mathbb{Q})$ and $\pi_\ast(X)\otimes \mathbb{Q}$ are finite dimensional.
In \cite[Proposition 3.2]{Lup}, Lupton showed that if a fibration is TNCZ and the fiber is formal and elliptic, then the formality of the base implies the formality of the total space. Combining this result with Theorem \ref{coformalmainthm}, we immediately obtain Proposition \ref{fibkoszulthm} for the Koszul property around fibrations. 
\begin{proposition}\label{fibkoszulthm}
Let 
\[
F\stackrel{}{\longhookrightarrow} E\stackrel{}{\longrightarrow} B
\]
be a fibration of simply connected topological spaces of finite type. Suppose that the fibration is TNHZ and TNCZ, and $F$ is Koszul, elliptic. If $B$ is Koszul and $cat_0(E^\prime)\leq 2$, then $E$ is Koszul. ~$\qqed$
\end{proposition}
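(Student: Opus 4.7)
The plan is to use Berglund's characterization \cite{Ber}, which states that a simply connected space of finite type is Koszul if and only if it is both formal and coformal. This reduces the proposition to verifying formality and coformality of $E$ separately, using the two parts of the hypothesis list that govern cohomology and homotopy respectively.

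For coformality of $E$, I would directly apply Theorem \ref{coformalmainthm}. The hypotheses needed are: the fibration is TNHZ (given), the fiber $F$ is coformal (immediate from $F$ being Koszul), the base $B$ is coformal (immediate from $B$ being Koszul), and $cat_0(E^\prime)\leq 2$ (given). All four are in hand, so Theorem \ref{coformalmainthm} gives that $E$ is coformal.

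For formality of $E$, I would invoke Lupton's \cite[Proposition 3.2]{Lup}. Its hypotheses are: the fibration is TNCZ (given), the fiber $F$ is formal (immediate from $F$ being Koszul) and elliptic (given), and the base $B$ is formal (immediate from $B$ being Koszul). These are all satisfied, so $E$ is formal.

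Combining the two conclusions with Berglund's equivalence in the reverse direction then yields that $E$ is Koszul. There is no real obstacle here: the proposition is essentially a formal combination of Theorem \ref{coformalmainthm}, Lupton's formality-in-fibrations result, and Berglund's characterization of Koszul spaces. The only care needed is to keep track of simple connectivity and the finite type assumption, but both propagate directly from the hypotheses of the proposition and are built into the statements of the results being invoked.
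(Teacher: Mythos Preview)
Your proposal is correct and matches the paper's own argument exactly: the paper states the proposition with a $\qqed$ and simply remarks that it follows immediately by combining Lupton's \cite[Proposition 3.2]{Lup} for formality with Theorem \ref{coformalmainthm} for coformality, via Berglund's characterization of Koszul spaces.
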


The new conditions in Proposition \ref{fibkoszulthm} impose strong restriction on rational category. Indeed, 
by a theorem of Jessup \cite[Proposition 3.6]{Jes}, if the fibration $F\rightarrow E\rightarrow B$ is TNCZ with $F$ formal, then $cat_0(E)\geq cat_0(B)+cat_0(F)$. In particular, when $B$ and $F$ are both rationally noncontractible, $cat_0(E)\geq cat_0(B)+cat_0(F)\geq 2$. Suppose all the conditions of Proposition \ref{fibkoszulthm} hold for the fibration, then $E\simeq_{\mathbb{Q}} E^\prime$, and $cat_0(E)=2$, which implies that $cat_0(B)=cat_0(F)=1$. It implies that both $B$ and $F$ are rationally wedges of spheres by \cite[Theorem 28.5 (ii)]{FHT}, and further $F$ is a sphere since it is elliptic. By the above argument, Proposition \ref{fibkoszulthm} descents to the following theorem which will be proved in Section \ref{sec: proofmainthm}.

\begin{theorem}\label{fibkoszulthm2}
Let 
\[
S^n\stackrel{i}{\longhookrightarrow} E\stackrel{}{\longrightarrow} B
\]
be a fibration with $B$ a simply connected wedge of spheres of finite type. 
Suppose the fibration satisfies one of the following:
\begin{itemize}
\item[(1).] $n$ is odd and $i^\ast$ is rationally nontrivial on cohomology;
\item[(2).] $n$ is even, the fibration is TNHZ and TNCZ;
\item[(3).] $n$ is even, $B$ is a wedge of odd dimensional spheres, and the fibration is TNCZ.
\end{itemize}
Then $E$ is Koszul.
\end{theorem}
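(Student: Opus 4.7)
The plan is to derive Theorem \ref{fibkoszulthm2} from Proposition \ref{fibkoszulthm} by verifying its hypotheses case by case. Three of these hypotheses are uniform. The fiber $F=S^n$ is always elliptic and Koszul: every sphere is formal, and the minimal Sullivan model is purely quadratic, being $(\Lambda(x),0)$ when $n$ is odd and $(\Lambda(x,y),\,dy=x^2)$ when $n$ is even. The base $B$ is Koszul, since a wedge of spheres is both formal and coformal. Finally, $\mathrm{cat}_0(E^\prime)\leq 2$: applying the coformal limit functor to the given fibration and using that $B$ and $F$ already equal their coformal limits, we obtain a fibration $F\to E^\prime\to B$ (see (\ref{E'fibeq}) and Example \ref{coformalfibex1intro}); the product inequality \cite[Proposition 30.7 (ii)]{FHT} then gives $\mathrm{cat}_0(E^\prime)\leq \mathrm{cat}_0(F)+\mathrm{cat}_0(B)=2$. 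Hence in each case it suffices to verify TNCZ and TNHZ.

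For case (1), because $H^*(S^n;\mathbb{Q})=\mathbb{Q}[x]/(x^2)$ has a unique positive-degree generator, the condition that $i^*$ is rationally nontrivial is already equivalent to TNCZ. In the relative minimal Sullivan model $(\Lambda V_B,d_B)\to(\Lambda V_B\otimes\Lambda(x),D)$, TNCZ amounts to $Dx$ being $d_B$-exact, so an automorphism $x\mapsto x-\eta$ (where $d_B\eta=Dx$) reduces us to $Dx=0$. The relative model then factors as the tensor product $(\Lambda V_B,d_B)\otimes(\Lambda(x),0)$, so $E\simeq_{\mathbb{Q}}B\times S^n$, and TNHZ is immediate.

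Case (2) is immediate since both TNCZ and TNHZ are assumed. The substantive content is case (3), where we must deduce TNHZ from TNCZ using the additional hypothesis that $B$ is a wedge of odd-dimensional spheres. The relative minimal Sullivan model is $(\Lambda V_B\otimes\Lambda(x,y),D)$ with $|x|=n$, $|y|=2n-1$, and by the same basis change as in case (1) we may assume $Dx=0$. A degree count then gives
\[
Dy=x^2+\beta x+\eta,\qquad \beta\in(\Lambda V_B)^n,\ \eta\in(\Lambda V_B)^{2n}.
\]
The key observation is that since $B$ is a wedge of odd-dimensional spheres, $\pi_*(B)\otimes\mathbb{Q}$ is concentrated in odd degrees, and hence the space of indecomposables satisfies $V_B^{2n}=0$. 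This forces $\eta$, which lies in degree $2n$, to be decomposable in $\Lambda V_B$. Combined with the obvious decomposability of $x^2$ and $\beta x$, we obtain $D(V_B\oplus\langle x,y\rangle)\subset\Lambda^{\geq 2}(V_B\oplus\langle x,y\rangle)$, so the relative Sullivan model is already the minimal model of $E$. In particular, $x$ and $y$ survive as linearly independent generators of $V_E$, and $i_*:\pi_*(F)\otimes\mathbb{Q}\to \pi_*(E)\otimes\mathbb{Q}$ is injective, yielding TNHZ.

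The main obstacle lies in case (3): encoding the structural hypothesis on $B$ as the parity vanishing $V_B^{2n}=0$, and recognizing that the absence of a $V_B$-generator component in $Dy$ is precisely what prevents a reduction and thereby guarantees TNHZ. Once this observation is in hand, the remainder is a direct verification of the hypotheses of Proposition \ref{fibkoszulthm}, combined with the coformal limit formalism established earlier in the paper.
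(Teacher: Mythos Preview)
Your argument is correct, with one organizational caveat: the bound $\mathrm{cat}_0(E')\leq 2$ is not genuinely ``uniform,'' since the coformal-limit fibration (\ref{E'fibeq}) on which it rests is constructed in the paper only under the TNHZ hypothesis. Strictly speaking you should establish TNHZ first in each case and \emph{then} invoke the product inequality; this is a reordering, not a gap, since you do verify TNHZ in every case.

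Your route diverges from the paper's in Cases (2) and (3). For Case (2) the paper does \emph{not} appeal to the product inequality; instead it works directly with the model $(\Lambda V_B\otimes\Lambda(a,b),d_1)$ of $E'$, proves by explicit cup-product calculations (using TNCZ and that $B$ has cup length $1$) that the cup length of $E'$ is at most $2$, and then uses formality of $E'$ (via \cite[Proposition~3.2]{Lup}) to convert cup length into $\mathrm{cat}_0$. Your one-line appeal to \cite[Proposition~30.7~(ii)]{FHT} on the fibration $S^n\to E'\to B$ sidesteps all of this and is considerably shorter. For Case (3) the paper bypasses Proposition~\ref{fibkoszulthm} altogether: it invokes \cite[Theorem~2.2]{Lup} to conclude that a TNCZ fibration with $F_0$-fibre over a base with only odd-degree rational cohomology is rationally trivial, giving $E\simeq_{\mathbb{Q}}B\times S^n$ directly. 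Your parity argument---$\pi_\ast(\Omega B)\otimes\mathbb{Q}$ is a free Lie algebra on even-degree generators, hence $V_B$ is concentrated in odd degrees and the potential linear term of $Dy$ in $V_B^{2n}$ vanishes---is a nice self-contained substitute that recovers TNHZ without external input, at the cost of then routing everything through Proposition~\ref{fibkoszulthm}.
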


\subsection{Organization of the paper}
The paper is organized as follows. 
In Section \ref{sec: prelim}, we review some basics on rational homotopy theory for the latter use and to fix notations. We also prove two preliminary lemmas and show a simple case of coformality around fibration as warm up.
In Section \ref{sec: limit}, we introduce the notion of coformal limit and study its basic properties.
Section \ref{sec: proofmainthm} is devoted to prove the main theorems, Theorem \ref{coformalmainthm}, Theorem \ref{coformalmainthm2} and Theorem \ref{fibkoszulthm2}, of this paper.
In Section \ref{sec: counterexamples}, we show that in Theorem \ref{coformalmainthm} the condition on the coformality of the fiber is natural by Proposition \ref{coformalprop3}, the TNCZ condition is necessary and the category condition is optimal by counterexamples.

\bigskip

\noindent{\bf Acknowledgements.}
Ruizhi Huang was supported by National Natural Science Foundation of China (Grant nos. 11801544 and 11688101), and ``Chen Jingrun'' Future Star Program of AMSS.

\numberwithin{equation}{section}
\numberwithin{theorem}{section}
\section{Preliminary results} 
\label{sec: prelim} 
In this section, we first briefly recall some necessary terminologies and notations used in this paper, while for the detailed knowledge of rational homotopy theory one can refer to the standard literature \cite{FHT}. Then we show two preliminary lemmas which are useful to prove coformality in our context. As a warm up, we also prove one simple case of coformality around fibration with examples.

Recall a $CW$ complex $X$ is rationally {\it formal} if the commutative differentiable graded algebra $(H^\ast(X;\mathbb{Q}), 0)$ is a commutative model of $X$; and is rationally {\it coformal} if the differentiable graded Lie algebra $(L_X, 0)$ is a Lie model of $X$, where $L_X=\pi_\ast(\Omega X)\otimes \mathbb{Q}$ is the {\it homotopy Lie algebra} of $X$. Suppose $(\Lambda V_X, d)$ is a {\it Sullivan model} of $X$. The differential $d=\sum\limits_{i\geq 0}d_i$ with $d_i: V_X\rightarrow \Lambda^{i+1} V_X$, and $(\Lambda V_X, d)$ is {\it minimal} if the linear part $d_0=0$. In the latter case, $V_X$ is dual to $\pi_\ast(\Omega X)\otimes \mathbb{Q}$. Moreover, $X$ is coformal if and only if it has a {\it purely quadratic} Sullivan model $C^\ast (L_X, 0)=(\Lambda (sL_X)^{\#}, d_1)$, where $C^\ast(-)$ is the {\it commutative cochain algebra functor}, $s$ is the suspension and $\#$ is the dual operation.

Let $f: X\rightarrow Y$ be a map of simply connected topological spaces. It admits a {\it relative minimal Sullivan model}
\[
\widehat{f}: (\Lambda V_Y,d) \stackrel{}{\longrightarrow} (\Lambda V_Y\otimes \Lambda W,d)
\]
as a special {\it Sullivan representative}, where $(\Lambda V_Y,d)$ is a minimal Sullivan model of $Y$, and $d(W)\subseteq (\Lambda^{+} V_Y\otimes \Lambda W)\oplus \Lambda^{\geq 2} W$. In general, $(\Lambda V_Y\otimes \Lambda W,d)$ itself may be not minimal.
\subsection{Two preliminary lemmas}
\label{subsec: 2lemmas}
\begin{lemma}\label{TNHZlemma}
Let $f: X\rightarrow Y$ be a map of simply connected topological spaces. Let $\widehat{f}: (\Lambda V_Y,d) \stackrel{}{\rightarrow} (\Lambda V_Y\otimes \Lambda W,d)$ a relative minimal Sullivan model of $f$.
Then $f_\ast:\pi_\ast(X)\otimes \mathbb{Q}\rightarrow \pi_\ast(Y)\otimes\mathbb{Q}$ is an epimorphism if and only if $(\Lambda V_Y\otimes \Lambda W,d)$ is a minimal Sullivan algebra.
\end{lemma}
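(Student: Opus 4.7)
The plan is to reduce the equivalence to the vanishing of the linear part of the differential $d$ restricted to $W$, and then to identify this linear part with (the dual of) the rational connecting homomorphism of the homotopy fibre sequence associated to $f$.

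In the paper's conventions, write $d=\sum_{i\geq 0}d_i$, so that $d_0 : V_Y\oplus W\to V_Y\oplus W$ is the linear part. Then $(\Lambda V_Y\otimes \Lambda W, d)$ is minimal exactly when $d_0=0$. Minimality of $(\Lambda V_Y,d)$ already forces $d_0|_{V_Y}=0$, while the relative minimality condition $d(W)\subseteq (\Lambda^{+}V_Y\otimes \Lambda W)\oplus \Lambda^{\geq 2}W$ excludes any linear $W\to W$ contribution, so $d_0|_W$ automatically has image in $V_Y$. Hence $(\Lambda V_Y\otimes \Lambda W, d)$ is minimal if and only if the linear map $d_0|_W : W\to V_Y$ vanishes.

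Now convert $f$ to a fibration and let $F$ be its homotopy fibre, giving a rational fibration $F\to X\to Y$. Quotienting the relative model by the differential ideal generated by $V_Y$ produces $(\Lambda W,\overline{d})$, a Sullivan model of $F$; the relative minimality condition gives $\overline{d}(W)\subseteq \Lambda^{\geq 2}W$, so this quotient model is in fact minimal. Consequently one has natural identifications $V_Y^{\#}\cong \pi_\ast(Y)\otimes\mathbb{Q}$ and $W^{\#}\cong \pi_\ast(F)\otimes\mathbb{Q}$, and by the standard translation between relative minimal Sullivan models and fibrations (see \cite[Chapter 15]{FHT}), the dual of $d_0|_W$ is, up to the usual degree shift, the rational connecting homomorphism $\partial : \pi_\ast(Y)\otimes\mathbb{Q}\to \pi_{\ast-1}(F)\otimes\mathbb{Q}$ in the long exact sequence of $F\to X\to Y$. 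From that long exact sequence, $f_\ast$ is surjective if and only if $\partial=0$, if and only if $d_0|_W=0$, if and only if $(\Lambda V_Y\otimes \Lambda W,d)$ is minimal.

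The main obstacle is the translation invoked in the third step. One must verify that the quotient $(\Lambda W,\overline{d})$ really is the minimal Sullivan model of the homotopy fibre (so that its generators dualise to rational homotopy groups of $F$) and carefully handle the degree shift, since $d$ raises degree by one while $\partial$ lowers it by one. Both points are standard in rational homotopy theory but depend essentially on the relative minimality assumption.
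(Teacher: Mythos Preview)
Your proposal is correct and follows essentially the same approach as the paper: both reduce minimality of $(\Lambda V_Y\otimes\Lambda W,d)$ to the vanishing of the linear part $d_0$, then identify this with surjectivity of $f_\ast$ via the homotopy fibre. The only cosmetic difference is that the paper stays on the algebraic side, writing down the short exact sequence of linear parts $0\to (V_Y,0)\to (V_Y\oplus W,d_0)\to (W,0)\to 0$ and using that $H(\widehat{f}_1)$ is dual to $f_\ast$, whereas you dualise one step further and phrase the same identification as $(d_0|_W)^\#=\partial$ in the long exact homotopy sequence.
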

\begin{proof}
The quotient $(\Lambda W, \bar{d})$ of the relative minimal Sullivan model $\widehat{f}$ is a minimal Sullivan model of the homotopy fiber $F$ of $f$, and there is the extension of Sullivan algebras
\[
(\Lambda V_Y,d) \stackrel{\widehat{f}}{\longrightarrow} (\Lambda V_Y\otimes \Lambda W,d)\stackrel{}{\longrightarrow} (\Lambda W, \bar{d}).
\]
The linear part of the extension gives a short exact sequence of cochain complexes
\[
0\stackrel{}{\longrightarrow}(V_Y,0)
\stackrel{\widehat{f}_1}{\longrightarrow} (V_Y\oplus W, d_0)
\stackrel{}{\longrightarrow} (W, 0)
\stackrel{}{\longrightarrow}0,
\]
where $H(\widehat{f}_1): H(V_Y,0)\rightarrow H(V_Y\oplus W, d_0)$ is dual to $f_\ast:\pi_\ast(X)\otimes \mathbb{Q}\rightarrow \pi_\ast(Y)\otimes\mathbb{Q}$. Hence $f_\ast$ is an epimorphism if and only if $H(\widehat{f}_1)$ is a monomorphism, if and only if $d_0=0$, if and only if $(\Lambda V_Y\otimes \Lambda W,d)$ is minimal.
\end{proof}

\begin{lemma}\label{fibmodellemma}
Let 
\[
F\stackrel{i}{\hookrightarrow} E\stackrel{p}{\rightarrow} B
\]
be a fibration of simply connected topological spaces of finite type. Suppose the fibration is TNHZ, and $F$ is coformal. Then there exists a model of the fibration 
\begin{equation}\label{modelFEBeq}
(\Lambda V_B, d)\stackrel{\widehat{p}}{\longrightarrow} (\Lambda V_B\otimes C^\ast (L_F), d)\stackrel{\widehat{i}}{\longrightarrow}C^\ast (L_F, 0),
\end{equation}
where $(\Lambda V_B, d)$ is a minimal Sullivan model of $B$, $(L_F, 0)$ is a Lie model of $F$, and $(\Lambda V_B\otimes C^\ast (L_F), d)$ is a minimal model of $E$.
\end{lemma}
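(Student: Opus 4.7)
The plan is to construct the desired model in three stages: first, start with a relative minimal Sullivan model of $p$; second, use the coformality of $F$ to replace its minimal fiber model by the purely quadratic algebra $C^\ast(L_F,0)$; and third, invoke Lemma \ref{TNHZlemma} to deduce minimality of the total algebra.

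Concretely, I would begin with the relative minimal Sullivan model
\[
(\Lambda V_B,d)\stackrel{\widehat p}{\longrightarrow}(\Lambda V_B\otimes\Lambda W,d)\stackrel{}{\longrightarrow}(\Lambda W,\bar d)
\]
of the projection $p:E\to B$, where $(\Lambda W,\bar d)$ is a minimal Sullivan model of the homotopy fiber $F$. Since $F$ is coformal, the two minimal Sullivan models $(\Lambda W,\bar d)$ and $C^\ast(L_F,0)=(\Lambda (sL_F)^\#,d_1)$ of $F$ are isomorphic; fix such an isomorphism $\phi$ and extend it by the identity on $\Lambda V_B$ to a graded algebra isomorphism $\Phi:\Lambda V_B\otimes\Lambda W\xrightarrow{\cong}\Lambda V_B\otimes C^\ast(L_F)$. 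Transporting $d$ through $\Phi$ endows the right-hand side with a differential and produces a relative Sullivan algebra whose fiber quotient is precisely $C^\ast(L_F,0)$ and which still models the given fibration.

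For minimality of the total algebra, I would observe that the TNHZ hypothesis, namely injectivity of $i_\ast:\pi_\ast(F)\otimes\mathbb Q\to\pi_\ast(E)\otimes\mathbb Q$, is equivalent via the long exact sequence of the fibration to surjectivity of $p_\ast:\pi_\ast(E)\otimes\mathbb Q\to\pi_\ast(B)\otimes\mathbb Q$. Applying Lemma \ref{TNHZlemma} to $p$ then forces $(\Lambda V_B\otimes\Lambda W,d)$ to be a minimal Sullivan algebra, and this minimality is preserved by the graded algebra isomorphism $\Phi$. Hence $(\Lambda V_B\otimes C^\ast(L_F),d)$ is a minimal Sullivan model of $E$ that fits into (\ref{modelFEBeq}).

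The main subtlety will be verifying that the transported differential on $\Lambda V_B\otimes C^\ast(L_F)$ still satisfies the relative Sullivan condition $d((sL_F)^\#)\subseteq (\Lambda^+ V_B\otimes C^\ast(L_F))\oplus\Lambda^{\geq 2}(sL_F)^\#$, so that the construction gives an honest relative Sullivan algebra rather than merely a cdga. This should follow from the corresponding property of $d$ on $W$ in the original relative model together with the fact that $\phi$ is an isomorphism of augmented cdga's that preserves word length on the fiber part, but this bookkeeping is the most delicate point of the argument.
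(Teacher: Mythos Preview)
Your proposal is correct and follows essentially the same route as the paper's proof: build a relative minimal model of $p$, replace the fiber by $C^\ast(L_F,0)$ via the isomorphism of minimal models coming from coformality, then invoke Lemma~\ref{TNHZlemma} (after translating TNHZ into surjectivity of $p_\ast$) to obtain minimality. The only minor imprecision is your phrase ``preserves word length'': an isomorphism $\phi$ of minimal Sullivan algebras need not preserve word length on the nose, but it does preserve the filtration $\Lambda^{\geq k}$ (since its linear part is an isomorphism of generators), and that is all you need to check both the relative Sullivan condition and minimality after transport.
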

\begin{proof}
Since $F$ is coformal, it has a minimal Sullivan model of the form $C^\ast (L_F, 0)=(\Lambda (sL_F)^{\#}, d_1)$ as the associated commutative cochain algebra of $(L_F,0)$ \cite[Example 7 in Chapter 24 (f)]{FHT}. Let $\widehat{p}: (\Lambda V_B, d)\stackrel{}{\rightarrow} (\Lambda V_B\otimes \Lambda W, d^\prime)$ be a relative minimal Sullivan model of $p$, whose quotient $(\Lambda W,\bar{d})$ is then a minimal Sullivan model of $F$. Since minimal Sullivan model is unique up to isomorphism, $(\Lambda W,\bar{d})\cong C^\ast (L_F, 0)$. It can be used to define an isomorphism of relative minimal Sullivan algebras $\varphi: (\Lambda V_B\otimes C^\ast (L_F), d)\stackrel{}{\rightarrow}(\Lambda V_B\otimes \Lambda W, d^\prime)$, where $\varphi$ is identity on $(\Lambda V_B, d)$ and $d$ is induced from $d^\prime$ through $\varphi$. Then we obtain the extension (\ref{modelFEBeq}) of Sullivan algebras as the model of the fibration. Since the fibration is TNHZ, $p_\ast: \pi_\ast(E)\otimes \mathbb{Q}\rightarrow \pi_\ast(B)\otimes \mathbb{Q}$ is an epimorphism. By Lemma \ref{TNHZlemma}, $(\Lambda V_B\otimes C^\ast (L_F), d)$ is a minimal Sullivan algebra.
\end{proof}

\subsection{A simple case}
\label{subsec: coformalfib1}
For a space $X$, we call $X$ of {\it homotopy dimension $n$} if $\pi_m(X)\otimes \mathbb{Q}=0$ for any $m\geq n+1$.
\begin{proposition}\label{coformalprop1}
Let 
\[
F\stackrel{i}{\hookrightarrow} E\stackrel{p}{\rightarrow} B
\]
be a fibration of simply connected topological spaces of finite type. Suppose the fibration is TNHZ, and $F$ and $B$ are coformal. If $F$ is of homotopy dimension $n$ and $B$ is $m$-connected such that $n\leq m+3$, then $E$ is coformal. 
\end{proposition}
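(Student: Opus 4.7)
The plan is to apply Lemma \ref{fibmodellemma} to obtain a minimal Sullivan model of $E$ of the form $(\Lambda V_B \otimes \Lambda W, d)$, where $(\Lambda V_B, d)$ is a minimal Sullivan model of $B$ and $\Lambda W = C^\ast(L_F, 0) = (\Lambda (sL_F)^{\#}, d_1)$. Since $B$ is coformal, we may choose $(\Lambda V_B, d)$ to be purely quadratic, i.e.\ $(\Lambda V_B, d) \cong C^\ast(L_B, 0)$, so that the differential restricted to $V_B$ lands in $\Lambda^2 V_B \subseteq \Lambda^2(V_B \oplus W)$. It then suffices to show that the full differential on $V_B \oplus W$ is purely quadratic; equivalently, that for every $w \in W$, the element $d w$ has no components of wedge-weight $\geq 3$ in $\Lambda V_B \otimes \Lambda W$.

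The pure fiber part of $dw$, obtained by setting $V_B = 0$, equals $d_1(w) \in \Lambda^2 W$ by the coformality of $F$. Hence what must be ruled out are the cross terms in $dw$ living in $\Lambda^p V_B \otimes \Lambda^q W$ with $p \geq 1$ and $p + q \geq 3$.

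I would handle this by a degree count. The assumptions give the lower bounds $|v| \geq m+1$ for $v \in V_B$ (since $B$ is $m$-connected) and $|w'| \geq 2$ for $w' \in W$ (since $F$ is simply connected), together with the upper bound $|w'| \leq n$ for $w' \in W$ (since $F$ has homotopy dimension $n$). For $w \in W$ we therefore have $|dw| \leq n + 1$. On the other hand, any cross term in $\Lambda^p V_B \otimes \Lambda^q W$ with $p \geq 1$ has degree at least $p(m+1) + 2q$, and for $p + q \geq 3$ the minimum over admissible bidegrees is realized at one of $(1,2)$, $(2,1)$, $(3,0)$, giving the lower bounds $m+5$, $2m+4$, and $3m+3$ respectively. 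Each of these strictly exceeds $n + 1$ under the hypothesis $n \leq m+3$ (using $m \geq 1$), so by degree no such term can appear in $dw$. Hence $d$ is purely quadratic on $V_B \oplus W$, and $(\Lambda V_B \otimes \Lambda W, d)$ is a purely quadratic Sullivan model of $E$, proving that $E$ is coformal.

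There is no real obstacle here beyond bookkeeping: the substantive content is encapsulated in Lemma \ref{fibmodellemma}, which supplies a minimal model with the fiber factor already in coformal shape, and in the choice of a purely quadratic model for $B$. The only mildly delicate point is making sure the bidegree analysis covers the three extremal bidegrees $(1,2)$, $(2,1)$, $(3,0)$ simultaneously, which is exactly where the numerical hypothesis $n \leq m + 3$ enters.
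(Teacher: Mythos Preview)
Your proof is correct and follows essentially the same approach as the paper: both invoke Lemma \ref{fibmodellemma} to obtain a minimal model $(C^\ast(L_B)\otimes C^\ast(L_F),d)$ and then eliminate the non-quadratic part of $d$ on $(sL_F)^\#$ by a degree count. The only cosmetic difference is that the paper bounds every wordlength-$\geq 3$ term involving a $V_B$ factor uniformly by $(m+1)+2\cdot 2=m+5$, whereas you split into the bidegrees $(1,2),(2,1),(3,0)$; since the minimum of your three bounds is $m+5$, the arguments coincide.
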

\begin{proof}
By Lemma \ref{fibmodellemma}, the fibration has a model of the form
\[
 C^\ast (L_B, 0)\stackrel{\widehat{p}}{\longrightarrow} (C^\ast (L_B)\otimes C^\ast (L_F), d)\stackrel{\widehat{i}}{\longrightarrow}C^\ast (L_F, 0),
\]
where $(L_B,0)$ is a Lie model of $B$ since $B$ is coformal and $(C^\ast (L_B)\otimes C^\ast (L_F), d)$ is a minimal Sullivan algebra. To show $E$ is coformal, it suffices to show the differential $d$ is quadratic on $(sL_F)^\#$.

The assumption $F$ is of homotopy dimension $n$ is equivalent to that $(sL_F)^\#$ concentrates in degrees less than $n+1$, while $B$ is $m$-connected implies that $(sL_B)^\#$ concentrates in degrees larger than $m$. For any $v\in (sL_F)^\#$, we can write $d(v)=d_1(v)+\theta(v)$ such that $\theta(v)\in \Lambda^{+}(sL_B)^\#\otimes  \Lambda^{\geq 2}((sL_B)^\#\oplus (sL_F)^\#)$. It follows that $n+1\geq |v|+1=|\theta(v)|\geq (m+1)+4$, which contradicts the assumption that $n\leq m+3$. Hence $\theta(v)=0$ for any $v\in (sL_F)^\#$, and $d=d_1$ in $(C^\ast (L_B)\otimes C^\ast (L_F), d)$. This shows that $E$ is coformal and the proposition is proved.
\end{proof}

\begin{example}
Let $S^2 \stackrel{i}{\hookrightarrow} E\stackrel{p}{\rightarrow} B$ be a fibration with $B$ simply connected of finite type. Suppose $i_\ast: \pi_3(S^2)\rightarrow \pi_3(E)$ is rationally nontrivial. Then it is easy to see that $i_\ast$ is a monomorphism on rational homotopy groups and the fibration is TNHZ. Since $S^2$ is of homotopy dimension $3$, $E$ is coformal by Proposition \ref{coformalprop1}. The special case when $B$ is a simply connected $4$-manifold was considered in \cite{Hua}.
\end{example}

\section{Coformal limit} 
\label{sec: limit} 
Let $X$ be a simply connected topological space with a minimal Sullivan model $(\Lambda V, d)$. There is the associated minimal Sullivan algebra $(\Lambda V, d_1)$ with purely quadratic differential. Let $X^\prime$ be a geometric realization of $(\Lambda V, d_1)$. It is clear that $X^\prime$ is coformal.

This construction is functorial.
Let $f: X\stackrel{}{\rightarrow} Y$ be a map of simply connected topological spaces of finite type. Let $\widehat{f}: (\Lambda V_Y,d)\rightarrow (\Lambda V_X, d)$ be a Sullivan representative of $f$. The {\it linear part} of $\widehat{f}$ is a linear map $Q\widehat{f}: V_Y\stackrel{}{\rightarrow} V_X$ defined by $\widehat{f} v- Q\widehat{f}v\in \Lambda^{\geq 2} V_X$ for any $v\in V_Y$. It extends to morphism of minimal Sullivan algebras
\[
\widehat{f}_1:=\Lambda Q\widehat{f}: (\Lambda V_Y,d_1)\rightarrow (\Lambda V_X, d_1),
\]
which can be realized by a rational map $f^\prime: X^\prime \stackrel{}{\rightarrow} Y^\prime$.

\begin{lemma}\label{coformallimitlemma}
The rational homotopy type of $X^\prime$ is independent of the choice of the minimal Sullivan model of $X$.
\end{lemma}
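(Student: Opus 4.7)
Two minimal Sullivan models of $X$ are isomorphic, so the task reduces to showing the following: given an isomorphism $\varphi:(\Lambda V,d)\stackrel{\cong}{\longrightarrow}(\Lambda V',d')$ of minimal Sullivan models of $X$, the induced map on linear parts $Q\varphi:V\to V'$ extends to an isomorphism $\Lambda Q\varphi:(\Lambda V,d_1)\stackrel{\cong}{\longrightarrow}(\Lambda V',d'_1)$ of their purely quadratic associated algebras. Once this is established, the geometric realizations of $(\Lambda V,d_1)$ and $(\Lambda V',d'_1)$ coincide up to rational homotopy equivalence, and the lemma follows.

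My plan is to carry out this verification by comparing wedge-length components of the identity $d'\circ\varphi=\varphi\circ d$. Since both models are minimal, $\varphi$ restricts to a linear isomorphism $Q\varphi:V\to V'$, and consequently $\Lambda Q\varphi:\Lambda V\to\Lambda V'$ is an isomorphism of graded commutative algebras. For any $v\in V$, decompose $\varphi(v)=Q\varphi(v)+\varphi_{\geq 2}(v)$ with $\varphi_{\geq 2}(v)\in\Lambda^{\geq 2}V'$ and $d=d_1+d_{\geq 2}$ with $d_i(V)\subseteq\Lambda^{i+1}V$. On one side, by minimality $d'(\Lambda^{\geq 2}V')\subseteq\Lambda^{\geq 3}V'$, so
\[
d'\varphi(v)=d'_1(Q\varphi(v))\ +\ (\text{terms in }\Lambda^{\geq 3}V').
\]
On the other side, expanding each factor $\varphi(v_i)=Q\varphi(v_i)+\varphi_{\geq 2}(v_i)$ in the image of $d_1(v)=\sum v_iv_j\in\Lambda^2 V$ yields
\[
\varphi(d(v))=\Lambda Q\varphi(d_1(v))\ +\ (\text{terms in }\Lambda^{\geq 3}V'),
\]
since $\varphi(d_{\geq 2}(v))\in\Lambda^{\geq 3}V'$ as well. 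Comparing the $\Lambda^2 V'$-components of these two expressions gives $d'_1\circ Q\varphi=\Lambda Q\varphi\circ d_1$ on $V$, and by the derivation property this identity propagates to all of $\Lambda V$.

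Having verified that $\Lambda Q\varphi$ is an isomorphism of Sullivan algebras between $(\Lambda V,d_1)$ and $(\Lambda V',d'_1)$, we invoke the Sullivan--de Rham realization functor to conclude that the two candidate constructions $X'$ and $(X')'$ agree up to rational homotopy equivalence. The only subtle step is the wedge-length bookkeeping above; the rest is formal. I expect the main technical obstacle to be clean handling of the filtration by word-length, which rests crucially on the minimality hypothesis $d_0=0$ for both models (so that $d'$ raises word-length by at least one). No further finiteness or connectivity assumptions are needed beyond what is already standing for $X$.
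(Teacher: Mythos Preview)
Your proof is correct and follows essentially the same route as the paper: both arguments use that an isomorphism $\varphi$ between minimal Sullivan models induces, via its linear part $Q\varphi$, an isomorphism $\Lambda Q\varphi$ between the associated purely quadratic algebras. The paper invokes the functoriality stated just before the lemma (applied to both $\varphi$ and $\varphi^{-1}$) without spelling out why $\Lambda Q\widehat{f}$ commutes with $d_1$, whereas you supply that verification explicitly by comparing the $\Lambda^2$-components of $d'\varphi=\varphi d$; this is the only substantive difference, and it is one of exposition rather than strategy.
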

\begin{proof}
Let $(\Lambda V^\prime, d)$ be another minimal Sullivan model of $X$. Since minimal model is unique up to isomorphism, there is an isomorphism of minimal Sullivan algebras
\[
\varphi: (\Lambda V^\prime, d)\stackrel{}{\longrightarrow} (\Lambda V, d).
\]
By the previous argument, there are the induced morphisms 
\[
\varphi_1: (\Lambda V^\prime, d_1)\stackrel{}{\longrightarrow} (\Lambda V, d_1), \ \ 
\varphi^{-1}_1: (\Lambda V, d_1)\stackrel{}{\longrightarrow} (\Lambda V^\prime, d_1),
\]
such that $\varphi_1\circ \varphi_1^{-1}$ and $\varphi_1^{-1}\circ \varphi_1$ are identities. Hence the geometric realization of $(\Lambda V^\prime, d_1)$ is rational homotopy equivalent to that of $(\Lambda V^\prime, d)$. The lemma is proved.
\end{proof}

Based on Lemma \ref{coformallimitlemma}, we call $X^\prime$ the {\it coformal limit} of $X$. By the previous argument, the coformal limit is a covariant functor.
\begin{lemma}\label{coformal=limitlemma}
A simply connected topological space $X$ is coformal if and only if it is rational homotopy equivalent to its coformal limit.
\end{lemma}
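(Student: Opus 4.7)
The plan is to argue directly from the characterization, recalled in the preliminaries, that a simply connected space $X$ of finite type is coformal if and only if it admits a purely quadratic minimal Sullivan model, namely $C^\ast(L_X,0)=(\Lambda (sL_X)^\#,d_1)$. The key bridge is that $X^\prime$, by construction, has the purely quadratic Sullivan algebra $(\Lambda V,d_1)$ as a minimal model, since $d_1$ vanishes on $V\cap V$ and has no linear part, so $(\Lambda V,d_1)$ is itself a minimal Sullivan algebra, and hence by definition a minimal Sullivan model of $X^\prime$.

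For the ``only if'' direction, I would assume $X$ is coformal and choose the purely quadratic model $(\Lambda V,d_1)$ with $V=(sL_X)^\#$ as a minimal Sullivan model of $X$. Since the differential of this model is already its own quadratic part, the associated purely quadratic algebra used to define $X^\prime$ is literally $(\Lambda V,d_1)$ itself. By Lemma \ref{coformallimitlemma}, the rational homotopy type of $X^\prime$ does not depend on the model chosen, so $X^\prime$ and $X$ have the same minimal Sullivan model and are therefore rationally homotopy equivalent.

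For the ``if'' direction, I would assume $X\simeq_\mathbb{Q} X^\prime$ and take any minimal Sullivan model $(\Lambda V,d)$ of $X$. By construction, $(\Lambda V,d_1)$ is a minimal Sullivan model of $X^\prime$. A rational homotopy equivalence $X\simeq_\mathbb{Q} X^\prime$ forces an isomorphism of minimal Sullivan models $(\Lambda V,d)\cong (\Lambda V,d_1)$, so $X$ admits a purely quadratic minimal Sullivan model. Then the characterization of coformality via a purely quadratic model yields that $X$ is coformal.

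The only potentially delicate point is justifying that $(\Lambda V,d_1)$ really qualifies as a minimal Sullivan algebra so that $X^\prime$ is well defined and its minimal model is visibly purely quadratic; this is immediate because $d=\sum_{i\geq 0}d_i$ with $d_0=0$ by minimality of $(\Lambda V,d)$, so $d_1(V)\subseteq \Lambda^2 V$, and the Leibniz extension of $d_1$ on $\Lambda V$ squares to zero since $(\Lambda V,d_1)$ is the associated graded of a filtration on $(\Lambda V,d)$. Once this is noted the rest is a direct invocation of the coformality criterion and Lemma \ref{coformallimitlemma}.
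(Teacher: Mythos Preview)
Your proposal is correct and follows essentially the same approach as the paper. The only minor difference is in the ``if'' direction: the paper observes more directly that $X^\prime$ is coformal by construction, so $X\simeq_{\mathbb{Q}}X^\prime$ immediately gives coformality of $X$, whereas you pass through the isomorphism of minimal Sullivan models; both arguments are equivalent and equally short.
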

\begin{proof}
Let $X^\prime$ be the coformal limit of $X$ as above. If $X\simeq_{\mathbb{Q}} X^\prime$, $X$ is coformal. Conversely, if $X$ is coformal it has a purely quadratic Sullivan model $(\Lambda V, d_1)$. By the definition of coformal limit and Lemma \ref{coformallimitlemma}, the coformal limit $X^\prime$ is a geometric realization of $(\Lambda V, d_1)$. Hence $X\simeq_{\mathbb{Q}} X^\prime$ and the lemma is proved.
\end{proof}

From the Lie point of view, let $(L, d_L)$ be a Lie model of $X$. Then $H(L)$ is isomorphic to the homotopy Lie algebra of $X$. And it is easy to see that $(H(L), 0)$ is a Lie model of the coformal limit $X^\prime$. Let $f: X\rightarrow Y$ be a map of simply connected spaces with a {\it Lie representative} $\widetilde{f}: (L_X, d_{L_X})\stackrel{}{\rightarrow} (L_Y, d_{L_Y})$. Then there is the induced morphism $H(f): (H(L_X), 0)\rightarrow (H(L_Y), 0)$. Since the linear part of a morphism between minimal Sullivan algebras is dual to the Lie algebra morphism between their homotopy Lie algebras, $H(f)$ is a Lie representative of $f^\prime: X^\prime \rightarrow Y^\prime$.

\section{Proof of of main theorems} 
\label{sec: proofmainthm} 
In this section, we prove the main theorems, Theorem \ref{coformalmainthm}, Theorem \ref{coformalmainthm2}, and Theorem \ref{fibkoszulthm2}, of this paper.

\subsection{Proof of Theorem \ref{coformalmainthm}}
\label{subsec: proofthm1}
Let 
\begin{equation}\label{Efibeq}
F\stackrel{i}{\hookrightarrow} E\stackrel{p}{\rightarrow} B
\end{equation}
be a fibration of simply connected topological spaces of finite type. Suppose the fibration is TNHZ, and $F$ and $B$ are coformal. By Lemma \ref{fibmodellemma}, the fibration has a model of the form
\begin{equation}\label{modelFEBeq2}
 C^\ast (L_B, 0)\stackrel{\widehat{p}}{\longrightarrow} (C^\ast (L_B)\otimes C^\ast (L_F), d)\stackrel{\widehat{i}}{\longrightarrow}C^\ast (L_F, 0),
\end{equation}
where $(L_B,0)$ is a Lie model of $B$ and $(C^\ast (L_B)\otimes C^\ast (L_F), d)$ is a minimal Sullivan algebra. It induces an extension of minimal Sullivan algebras with purely quadratic differentials
\begin{equation}\label{indud1exteq}
 C^\ast (L_B, 0)\stackrel{\widehat{p}_1}{\longrightarrow} (C^\ast (L_B)\otimes C^\ast (L_F), d_1)\stackrel{\widehat{i}_1}{\longrightarrow}C^\ast (L_F, 0).
\end{equation}
It can be realized as a rational homotopy fibration
\begin{equation}\label{E'fibeq}
F\stackrel{}{\rightarrow} E^\prime\stackrel{}{\rightarrow} B,
\end{equation}
where $E^\prime$ is a geometric realization of $(C^\ast (L_B)\otimes C^\ast (L_F), d_1)$. In particular, $E^\prime$ is coformal and is the coformal limit of $E$.

Recall for a simply connected space $X$ with a minimal Sullivan algebra $(\Lambda V,d)$, there is the so-called {\it rational Toomer invariant} $e_0(X)$, which is equal to its algebraic counterpart, the {\it Toomer invariant} $e(\Lambda V,d)$, defined as the least integer $r$ such that the natural quotient
\[
\rho_r: (\Lambda V,d)\stackrel{}{\longrightarrow} (\Lambda V/ \Lambda^{>r}V, d)
\]
is injective on cohomology.

\begin{proposition}\label{coformalprop2}
Let 
\[
F\stackrel{i}{\hookrightarrow} E\stackrel{p}{\rightarrow} B
\]
be a fibration of simply connected topological spaces of finite type. Suppose the fibration is TNHZ, and $F$ and $B$ are coformal. If $e_0(E^\prime)\leq 2$, then $E$ is coformal. 
\end{proposition}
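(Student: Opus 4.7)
The plan is to produce a DGA isomorphism $(\Lambda V, d_1) \cong (\Lambda V, d)$, where $(\Lambda V, d)$ is the minimal Sullivan model of $E$ furnished by Lemma \ref{fibmodellemma}. Recall the structural input from that lemma: $V = V_B \oplus V_F$; $d|_{V_B}$ is the purely quadratic differential of $C^\ast(L_B, 0)$; and for $v \in V_F$ the difference $d(v) - d_1(v)$ lies in $(\Lambda^{+} V_B \otimes \Lambda V) \cap \Lambda^{\geq 3} V$. By the construction in Section \ref{sec: limit}, $(\Lambda V, d_1)$ is a minimal Sullivan model of the coformal limit $E'$, so any such isomorphism yields $E \simeq_{\mathbb{Q}} E'$, and then $E$ is coformal by Lemma \ref{coformal=limitlemma}.

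I would obtain the isomorphism by successively eliminating the higher-order components of $d$. Writing $d = d_1 + d_2 + d_3 + \cdots$ with $d_i : V \to \Lambda^{i+1} V$, note that $d_i|_{V_B} = 0$ for $i \geq 2$. Expanding $d^2 = 0$ by word length, the degree-three component recovers $d_1^2 = 0$ and the degree-four component gives $[d_1, d_2] = 0$ as derivations of $\Lambda V$. The core step of the proof is to construct a derivation $\phi : V \to \Lambda^{\geq 2} V$ of cohomological degree zero, with $\phi|_{V_B} = 0$, satisfying $[d_1, \phi] = d_2$. Conjugating by $\exp(-\phi)$ then produces a new differential on $\Lambda V$ with the same quadratic part $d_1$ but vanishing word-shift-two component; the property $d|_{V_B} = d_1|_{V_B}$ is preserved, since $\phi$ (and hence every iterated bracket $[\phi, [\phi, \ldots, d] \cdots ]$) vanishes on the subalgebra $\Lambda V_B$.

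To build $\phi$, I would induct on the cohomological degree of generators. Set $\phi(v) = 0$ for $v \in V_B$; the required relation $[d_1, \phi](v) = 0 = d_2(v)$ then follows from $d_1(V_B) \subset \Lambda^2 V_B$. For $v \in V_F^n$, assume $\phi$ has been defined on $V^{<n}$ and satisfies $[d_1, \phi] = d_2$ there. The task is to find $\phi(v) \in \Lambda^{\geq 2} V$ with
\[
d_1(\phi(v)) \;=\; d_2(v) + \phi(d_1(v)),
\]
which is well-defined since $d_1(v) \in \Lambda^2 V^{<n}$, and whose right-hand side lies in $\Lambda^{\geq 3} V$. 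A direct calculation using $d_1^2 = 0$, the relation $[d_1, d_2] = 0$, and the extension of the inductive hypothesis as a derivation on $\Lambda V^{<n}$ shows that the right-hand side is a $d_1$-cocycle. The assumption $e_0(E') = e(\Lambda V, d_1) \leq 2$ is precisely the statement that every $d_1$-cocycle in $\Lambda^{\geq 3} V$ is $d_1$-exact, so a primitive $\phi(v) \in \Lambda V$ exists; its linear component is necessarily a $d_1$-cocycle in $V$ and can be subtracted off so that $\phi(v) \in \Lambda^{\geq 2} V$.

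The main obstacle is making the procedure iterate. After killing $d_2$, the perturbed differential $d^{(2)} = d_1 + d_3^{(2)} + d_4^{(2)} + \cdots$ still satisfies $d_i^{(2)}|_{V_B} = 0$ for $i \geq 2$, and $(d^{(2)})^2 = 0$ gives $[d_1, d_3^{(2)}] = 0$. The same degree induction, now producing $\phi^{(2)} : V \to \Lambda^{\geq 3} V$, removes $d_3^{(2)}$, and so on at each successive word length. Because $V$ is of finite type with $V^{\leq 1} = 0$, only finitely many of the iterates $\phi^{(k)}$ act nontrivially on any generator of fixed cohomological degree, so the composition $\Psi = \exp(-\phi^{(1)}) \exp(-\phi^{(2)}) \cdots$ is a well-defined automorphism of $\Lambda V$; it realizes the desired isomorphism $(\Lambda V, d_1) \cong (\Lambda V, d)$, and the proposition follows.
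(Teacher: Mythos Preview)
Your argument is correct and rests on the same key observation the paper uses: the hypothesis $e_0(E')\le 2$ says precisely that every $d_1$-cocycle in $\Lambda^{\ge 3}V$ is $d_1$-exact, so the ``higher'' parts of $d$ can be absorbed by a change of variables. The organization, however, differs from the paper's. You run an outer induction on word length (kill $d_2$, then $d_3^{(2)}$, \dots) and inside each step a degree induction to build the derivation $\phi^{(k)}$; the elimination is then packaged as conjugation by $\exp(\pm\phi^{(k)})$. The paper instead inducts directly on cohomological degree: at the lowest degree where $d\neq d_1$, the entire tail $\theta(v)=d(v)-d_1(v)$ is already a $d_1$-cocycle (because $d=d_1$ in all lower degrees), so one finds $z$ with $d_1 z=\theta(v)$ and simply replaces $v$ by $v-z$. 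This single degree-by-degree substitution avoids the nested induction and the exponential machinery, and is noticeably shorter. Your approach buys a more structural picture—what you are writing is essentially a perturbation/homotopy-transfer argument—which would generalize more readily, at the cost of some bookkeeping.

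Two small technical comments. First, since $d_1$ preserves word length, the primitive $\phi(v)$ for a right-hand side lying in $\Lambda^3 V$ can be chosen in $\Lambda^2 V$ (not merely $\Lambda^{\ge 2}V$): project your primitive to its word-length-two component. This makes $\phi$ a genuine word-shift-one derivation and keeps the conjugation step clean. Second, check the sign: with $[d_1,\phi]=d_2$, the cubic part vanishes after conjugation by $\exp(\phi)$, not $\exp(-\phi)$. Neither point affects the validity of the strategy.
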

\begin{proof}
By Lemma \ref{fibmodellemma}, the fibration has a model of the form (\ref{modelFEBeq2}) where $(C^\ast (L_B)\otimes C^\ast (L_F), d)$ is a minimal model of $E$. To show $E$ is coformal, it suffices to show that $(C^\ast (L_B)\otimes C^\ast (L_F), d)$ is isomorphic to a purely quadratic minimal Sullivan algebra.

We prove this by induction. Since $d=d_1$ on $C^\ast(L_B)$, we only need to consider the differential $d$ on $(sL_F)^{\#}$.
Let $\{v_\alpha\}_{\alpha}$ be a basis of $(sL_F)^{\#}$. Suppose $\{v_{\alpha_1},\ldots, v_{\alpha_k}\}\subseteq \{v_\alpha\}_{\alpha}$ be the set of the generators with lowest degree such that
\[
d(v)=d_1(v)+\theta(v) ~{\rm with}~0\neq\theta(v)\in \Lambda^{+}(sL_B)^\#\otimes  \Lambda^{\geq 2}((sL_B)^\#\oplus (sL_F)^\#),
\] 
for any $v\in \{v_{\alpha_1},\ldots, v_{\alpha_k}\}$.
Then by degree reason, $d_1\theta(v_{\alpha_i})=d_1(d-d_1)(v_{\alpha_i})=d_1d(v_{\alpha_i})=dd(v_{\alpha_i})=0$. Hence, each $\theta(v_{\alpha_i})$ is a cocycle of wordlength larger than $2$ in $(C^\ast (L_B)\otimes C^\ast (L_F), d_1)$. The latter quadratic Sullivan algebra is a minimal model of the coformal limit $E^\prime$. Since $e_0(E^\prime)\leq 2$, $\theta(v_{\alpha_i})$ is coboundary and there exists an element $z_{\alpha_i}\in C^\ast (L_B)\otimes C^\ast(L_F)$ such that $d_1(z_{\alpha_i})=\theta(v_{\alpha_i})$. Note that the wordlength of $z_{\alpha_i}$ is larger than $1$, and hence $d(z_{\alpha_i})=d_1(z_{\alpha_i})=\theta(v_{\alpha_i})$ by degree reason. We then have
$d(v_{\alpha_i})=d_1(v_{\alpha_i})+d(z_{\alpha_i})$. 
Let $V$ be the graded vector space spanned by $\{v_{\alpha_1}^\prime, \ldots v_{\alpha_k}^\prime\}\cup \{v_\alpha\}_{\alpha\neq \alpha_1, \ldots \alpha_k}$ with $|v_{\alpha_i}^\prime|=|v_{\alpha_i}|$. There is an isomorphism of minimal Sullivan algebras
\[
\varphi: (C^\ast (L_B)\otimes \Lambda V, d^\prime)\stackrel{}{\longrightarrow}(C^\ast (L_B)\otimes C^\ast (L_F), d),
\]
where $\varphi$ is identity on $(C^\ast (L_B),d)$ and $\{v_\alpha\}_{\alpha\neq \alpha_1, \ldots \alpha_k}$, $\varphi(v^\prime_{\alpha_i})=v_{\alpha_i}-z_{\alpha_i}$, and $d^\prime=\varphi^{-1}\circ d\circ\varphi$. 
Then $d^\prime(v^\prime_{\alpha_i})=(\varphi^{-1}\circ d)( v_{\alpha_i}-z_{\alpha_i})=\varphi^{-1}(d_1(v_{\alpha_i}))=d_1(v_{\alpha_i})$. 
Hence we have constructed a minimal Sullivan model $(C^\ast (L_B)\otimes \Lambda V, d^\prime)$ of $E$ such that $d^\prime$ is quadratic for elements of degree less or equal to $|v_{\alpha_1}|$. Then by induction on the degree and repeating the above argument, we finally can obtain a minimal Sullivan model of $E$ with purely quadratic differential. This shows that $E$ is coformal and the proposition is proved.
\end{proof}

We can now prove Theorem \ref{coformalmainthm}.

\begin{proof}[Proof of Theorem \ref{coformalmainthm}]
It is known that coformal spaces \cite[p. 30]{FH} satisfy $cat_0=e_0$. Theorem \ref{coformalmainthm} follows from Proposition \ref{coformalprop2} immediately.
\end{proof}

\subsection{Proof of Theorem \ref{coformalmainthm2}}
\label{subsec: proofthm2}

Let 
\begin{equation}\label{Ycofibeq}
\Sigma A\stackrel{f}{\longrightarrow} Y \stackrel{h}{\longrightarrow} Z
\end{equation}
be a homotopy cofibration of simply connected topological spaces of finite type. 
The map $f$ is {\it inert} if $h$ is surjective in rational homotopy. Notice that when $A=S^{k}$ for some $k$, this is the classical notion of inert map. Theorem \ref{coformalmainthm2} concerns the relation between the coformality of $Y$ and that of $Z$. To prove it, we need a preliminary result from~\cite{BT}. 
\begin{theorem}[rational version of Proposition 3.5 in~\cite{BT}] 
\label{BTtheorem} 
Suppose that 
$\Sigma A\stackrel{f}{\rightarrow} Y \stackrel{h}{\rightarrow} Z$
is a homotopy cofibration of simply-connected spaces of finite type and $f$ is inert.
Then there is a rational homotopy fibration 
\[
(\Omega Z\wedge\Sigma A)\vee\Sigma A \stackrel{}{\longrightarrow} Y \stackrel{h}{\longrightarrow}Z,
\]
which splits after looping to give a rational homotopy equivalence 
\[
 \hspace{3.75cm}
 \Omega Y\simeq_{\mathbb{Q}}\Omega Z\times\Omega((\Omega Z\wedge\Sigma A)\vee\Sigma A).
 \hspace{3.75cm}\Box\] 
\end{theorem}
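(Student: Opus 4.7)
My plan is to follow the strategy of Beben-Theriault \cite{BT}, adapting it to exploit only rational surjectivity of $h_\ast$, which is what the inertness hypothesis provides. Let $F$ denote the homotopy fiber of $h: Y \to Z$, giving the standard fibration $F \to Y \to Z$. Since $h \circ f$ is null-homotopic (it is the middle composite in a cofiber sequence), $f$ admits a lift $\tilde{f}: \Sigma A \to F$. Together with the Puppe connecting map $\partial: \Omega Z \to F$ and the natural $\Omega Z$-action on $F$ from the fibration structure, one constructs a comparison map
\[
g: (\Omega Z \wedge \Sigma A) \vee \Sigma A \longrightarrow F,
\]
where the two wedge summands correspond, respectively, to a Whitehead/Samelson-type pairing of $\partial$ with $\tilde{f}$ and to $\tilde{f}$ itself. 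The rational fibration in the statement is obtained from the standard fibration $F \to Y \to Z$ by replacing $F$ rationally via $g$.

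\textbf{$g$ is a rational equivalence.} Set $L_X := \pi_\ast(\Omega X) \otimes \mathbb{Q}$. Inertness of $f$ means that $L_Y \to L_Z$ is surjective, so the long exact homotopy sequence yields $L_F \cong \ker(L_Y \to L_Z)$ as graded rational vector spaces. Using a Quillen Lie model of the cofibration $\Sigma A \to Y \to Z$ together with the standard results on inert cell attachments in rational homotopy theory (see e.g.\ \cite{FHT}), this kernel is free as a Lie algebra, generated by the rational homotopy classes coming from $\Sigma A$ acted upon by $L_Z$. On the other side, the Bott-Samelson theorem applied to the wedge of suspensions $(\Omega Z \wedge \Sigma A) \vee \Sigma A$ identifies its rational homotopy Lie algebra as the free Lie algebra on precisely the same generating vector space. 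A direct check on generators shows that $g_\ast$ realizes this isomorphism.

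\textbf{Splitting after looping.} Surjectivity of $h_\ast$ on rational homotopy gives surjectivity of $(\Omega h)_\ast$. Since $\Omega Y$ and $\Omega Z$ are rational H-spaces (weakly equivalent rationally to weak products of rational Eilenberg-MacLane spaces), the surjection $\Omega h$ admits a right homotopy inverse $s: \Omega Z \to \Omega Y$ rationally. Combining $s$ with the inclusion of the fiber $\Omega F \hookrightarrow \Omega Y$ via loop multiplication yields the desired rational equivalence $\Omega Z \times \Omega F \to \Omega Y$.

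\textbf{Main obstacle.} The hardest step is to match $\ker(L_Y \to L_Z)$ explicitly with the free Lie algebra appearing as the homotopy Lie algebra of $(\Omega Z \wedge \Sigma A) \vee \Sigma A$ and to verify that $g_\ast$ realizes this match. This hinges on the inert cell attachment theorem in rational homotopy theory, which controls precisely how the Lie algebra of the fiber sits inside that of $Y$ when $f$ is inert, and on a careful identification of the generators produced by $\partial$ and $\tilde{f}$ with those dictated by the Lie-model calculation.
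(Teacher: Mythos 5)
Your outline is essentially correct, but be aware that the paper itself offers no proof of this statement: it is imported wholesale from Proposition 3.5 of Beben--Theriault \cite{BT} (hence the $\Box$ printed immediately after the statement), so what you have written is a reconstruction of the external proof rather than a parallel to anything argued in the paper. The comparison is therefore with \cite{BT}: there the fibration $(\Omega Z\wedge\Sigma A)\vee\Sigma A\to Y\to Z$ is produced geometrically, by applying Mather's cube theorem to the defining pushout of the cofibration pulled back along the path--loop fibration over $Z$; this identifies the homotopy fiber of $h$ with the half-smash $\Omega Z\ltimes\Sigma A\simeq(\Omega Z\wedge\Sigma A)\vee\Sigma A$ and yields the splitting after looping, integrally or $p$-locally under suitable hypotheses, with the rational statement as a special case. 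Your route stays entirely on the Lie-model side: identify $L_F$ with $\ker(L_Y\to L_Z)$ via the short exact sequence coming from surjectivity of $h_\ast$, invoke the Halperin--Lemaire theory of inert attachments \cite{HL} to see that this kernel is the free Lie algebra on a copy of $UL_Z\otimes s^{-1}\widetilde{H}_\ast(\Sigma A;\mathbb{Q})$, and match this against the Bott--Samelson computation for the suspension $(\Omega Z\wedge\Sigma A)\vee\Sigma A$, whose reduced homology is exactly $H_\ast(\Omega Z;\mathbb{Q})\otimes\widetilde{H}_\ast(\Sigma A;\mathbb{Q})$ by Milnor--Moore. This is a legitimate and, for the purely rational statement, arguably more natural argument; its cost is that, as you acknowledge, the entire content is concentrated in the freeness theorem for inert sequences and in the verification that $g_\ast$ hits the prescribed generators, both of which you defer to the literature rather than carry out.

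Two points should be tightened if you flesh this out. First, the map $\Omega Z\times\Sigma A\to F$ obtained from the holonomy action composed with $1\times\tilde f$ only factors through the half-smash after you know the connecting map $\partial\colon\Omega Z\to F$ is rationally trivial; this does follow from inertness (surjectivity of $h_\ast$ on rational homotopy kills $\partial_\ast$, and $\Omega Z$ is a rational $H$-space of finite type), but it needs to be said, since otherwise the comparison map $g$ is not defined on the wedge. Second, $L_F\cong\ker(L_Y\to L_Z)$ should be recorded as an isomorphism of graded Lie algebras, not merely of graded vector spaces: the Lie structure is exactly what you need in order to apply \cite{HL} and to interpret the holonomy action on homotopy groups as the adjoint action of $L_Z$ on the kernel ideal.
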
 

We are ready to prove Theorem \ref{coformalmainthm2}.
Recall for the space $Y$, we have defined its coformal limit $Y^\prime$.

\begin{proof}[Proof of Theorem \ref{coformalmainthm2}]
By Theorem \ref{BTtheorem}, there is a rational homotopy fibration 
\begin{equation}\label{BTfibeq}
(\Omega Z\wedge\Sigma A)\vee\Sigma A \stackrel{}{\longrightarrow} Y \stackrel{h}{\longrightarrow}Z.
\end{equation}
Since suspension is wedge of spheres rationally, $(\Omega Z\wedge\Sigma A)\vee\Sigma A$ is rational homotopy equivalent to a wedge of spheres, hence is coformal. On the other hand, $f$ is inert, which is equivalent to that the rational fibration (\ref{BTfibeq}) is TNHZ. Applying Theorem \ref{coformalmainthm} to (\ref{BTfibeq}), $Y$ is coformal by the assumption.
\end{proof}

\begin{example}\label{coformalfibex4}
Let $Y$ be a connected sum of products of two simply connected spheres: $Y=(S^{k_1}\times S^{n-k_1})\#\cdots \# (S^{k_r}\times S^{n-k_r})$. This is a classical example of coformal spaces, which can be understood through Theorem \ref{coformalmainthm2}. Indeed, there is the homotopy cofibration
\[
(S^{k_2}\vee S^{n-k_2})\vee\cdots \vee (S^{k_r}\vee S^{n-k_r}) \stackrel{f}{\longrightarrow} Y \stackrel{h}{\longrightarrow} S^{k_1}\times S^{n-k_1},
\]
where $S^{k_1}\times S^{n-k_1}$ is coformal and $cat_0(Y^\prime)=cat_0(Y)=2$. 
Moreover, there is the homotopy commutative digram
\[
\begin{gathered}
\xymatrix{
\mathop{\bigvee}\limits_{i=1}^{r}(S^{k_i}\vee S^{n-k_i}) \ar[r]^<<<<<{q} \ar[d]^{i} &
S^{k_1}\vee S^{n-k_1} \ar[d]^{j} \\
Y \ar[r]^<<<<<<<<<<{h} &
S^{k_1}\times S^{n-k_1},
}
\end{gathered}
\]
where $q$ is the obvious projection and $i$ and $j$ are the inclusions of lower skeletons. Since $q_\ast$ and $j_\ast$ are surjective on homotopy groups, so is $h_\ast$. Hence $f$ inert, and $Y$ satisfies all the conditions of Theorem \ref{coformalmainthm2}.
\end{example}

\subsection{Proof of Theorem \ref{fibkoszulthm2}}
\label{subsec: proofthm3}
In the end of the section, let us prove Theorem \ref{fibkoszulthm2} from Proposition \ref{fibkoszulthm}.

\begin{proof}[Proof of Theorem \ref{fibkoszulthm2}]
First it is known that $S^n$ and $B$ are both Koszul, and $S^n$ is elliptic. We prove the theorem case by case.

Case (1).~When $n$ is odd, $cat_0(E^\prime)\leq cat_0(B)+cat_0(S^n)=2$ by Example \ref{coformalfibex1intro}. Further, $i^\ast$ is rationally nontrivial on cohomology implies that the fibration is both TNCZ and TNHZ. Then $E$ is Koszul by Proposition \ref{fibkoszulthm}.

Case (2).~When $n$ is even, $S^n$ is a so-called {\em $F_0$-space}, that is, an elliptic space with positive Euler characteristic. By Lemma \ref{fibmodellemma}, the fibration has a model of the form
\[
(\Lambda V_B, d_1)\stackrel{}{\longrightarrow} (\Lambda V_B\otimes \Lambda (a, b), d)\stackrel{\widehat{i}}{\longrightarrow} (\Lambda (a, b), db=a^2),
\]
where $|a|=n$, $(\Lambda V_B, d_1)$ and $(\Lambda V_B\otimes \Lambda (a, b), d)$ are minimal Sullivan models of $B$ and $E$ respectively. Since the fibration is TNCZ, $da=0$ in $(\Lambda V_B\otimes \Lambda (a, b), d)$
Consider the induced fibration $F\rightarrow E^\prime\rightarrow B$ of coformal limits. 
It has a model of the form
\[
(\Lambda V_B, d_1)\stackrel{}{\longrightarrow} (\Lambda V_B\otimes \Lambda (a, b), d_1)\stackrel{}{\longrightarrow} (\Lambda (a, b), db=a^2).
\]
In particular, $d_1a=0$, $d_1b=a^2-\theta(b)$ such that $\theta(b)\in \Lambda ^{+}V_B\otimes \mathbb{Q}\{a\}$ has homogenous length $2$, and the induced fibration is also TNCZ. Hence, $d_1(\theta(b))=0$, and then $[a]^2=[\theta(b)]$ in the algebra $H^\ast (\Lambda V_B\otimes \Lambda (a, b), d_1)$. It is clear that the {\it cup length} of $B$ is $1$ as it is a wedge of spheres.

{\it Claim $(A)$: $0=[a]\cup [\theta(b)]\in H^\ast(E^\prime;\mathbb{Q})$.}

First, notice that $\theta(b)=0$ or $\theta(b)=a y$ for some $y\in V_B$, or $\theta (b)\in \Lambda^2V_B$.
If $\theta(b)=a y$, $|y|=n$ is even. Then since the original fibration is TNCZ and $B$ is a wedge of spheres, $dy=0$ in $(\Lambda V_B\otimes \Lambda (a, b), d)$. In particular, $d_1(y)=0$ in $(\Lambda V_B\otimes \Lambda (a, b), d_1)$. It implies that as cohomology classes of $H^\ast(\Lambda V_B\otimes \Lambda (a, b), d_1)$
\[
[a]\cup [\theta(b)]=[a]\cup [a] \cup [y]=[\theta(b)] \cup [y]= [a] \cup [y]\cup [y]=0.
\]
If $\theta (b)\in \Lambda^2V_B$, then $[\theta(b)]=0$ in $H^\ast(\Lambda V_B\otimes \Lambda (a, b), d_1)$. Hence in all cases, $[a]\cup [\theta(b)]=0$ in $H^\ast(\Lambda V_B\otimes \Lambda (a, b), d_1)\cong H^\ast(E^\prime;\mathbb{Q})$, and the Claim ($A$) is proved.

{\it Claim $(B)$: the cup length of $E^\prime$ is at most $2$.}

Since the induced fibration is TNCZ, $H^\ast(E^\prime;\mathbb{Q})\cong H^\ast(B;\mathbb{Q})\otimes \mathbb{Q}\{[a]\}$ as $H^\ast(B;\mathbb{Q})$-module. For any three elements $k_i [a]+ l_i x_i$ with $x_i\in H^+(B;\mathbb{Q})$ and $k_i$, $l_i\in \mathbb{Q}$ ($i=1, 2, 3$), by Claim $(A)$ and the fact that $B$ has cup length $1$
\[
\begin{split}
&\ \ \ \ (k_1 [a]+ l_1 x_1)\cup (k_2 [a]+ l_2 x_2) \cup (k_3 [a]+ l_3 x_3)\\
&= (k_1k_2 [\theta(b)]+ k_1 l_2[a] \cup x_2+k_2 l_1[a]\cup x_1)\cup (k_3 [a]+ l_3 x_3)\\
&=k_1k_2k_3 [a]\cup  [\theta(b)]+  k_1 l_2 k_3  [\theta(b)] \cup x_2 + k_2 l_1 k_3  [\theta(b)]\cup x_1\\
&=0.\\
\end{split}
\]
By similar computation, the cup product of any other three elements in $H^\ast(E^\prime;\mathbb{Q})$ vanishes.
This means that $E^\prime$ has cup length at most $2$, and the Claim ($B$) is proved.

Lastly by \cite[Proposition 3.2]{Lup} or \cite[Corollary B]{AK} $E^\prime$ is formal, and hence its rational category is equal to its cup length \cite[Chapter 29, Example 4]{FHT}. From Claim ($B$) we have $cat_0(E^\prime)\leq 2$. Then $E$ is Koszul by Proposition \ref{fibkoszulthm}.

Case (3).~When $n$ is even and $B$ is a wedge of odd dimensional spheres, the fibration is TNCZ implies that it is rationally trivial and $E\simeq_{\mathbb{Q}} B\times F$ by \cite[Theorem 2.2]{Lup}. In particular, $E$ is Koszul. 
\end{proof}

\section{Discussions on the conditions of Theorem \ref{coformalmainthm}}
\label{sec: counterexamples}
There are several conditions imposed in Theorem \ref{coformalmainthm}. One may wonder whether these conditions are necessary or can be weakened. The answer is negative. First, in Proposition \ref{coformalprop3} we show that the coformality of the total space implies the coformality of the fiber in a fibration. Thus the condition that the fiber is coformal is natural. Moreover, we construct Example \ref{counterex1} to show that the TNHZ condition is necessary, and Example \ref{counterex3} to show that the category inequality condition is optimal. Finally, through Example \ref{counterex2} we also prove that the converse statement of Theorem \ref{coformalmainthm} is false, that is, under the same conditions the coformality of the total space does not necessarily imply the coformality of the base.

\begin{proposition}\label{coformalprop3}
Let 
\[
F\stackrel{i}{\hookrightarrow} E\stackrel{p}{\rightarrow} B
\]
be a fibration of simply connected topological spaces of finite type. Suppose the fibration is TNHZ. If $E$ is coformal, then $F$ is coformal.
\end{proposition}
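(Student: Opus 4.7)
The plan is to reduce the proposition to a classical algebraic result via the Lie-algebra characterization of coformality. The key observation is that a simply connected space $X$ of finite type is coformal if and only if its homotopy Lie algebra $L_X:=\pi_\ast(\Omega X)\otimes\mathbb{Q}$ is a free graded Lie algebra over $\mathbb{Q}$. The forward direction is immediate from the Lie-model definition in the preliminaries: if $X$ is coformal, the minimal Lie model of $X$ has the form $(\mathbb{L}(V),0)$, and $L_X=H_\ast(\mathbb{L}(V),0)=\mathbb{L}(V)$ is free. For the converse, if $L_X$ is free on $V_0$, one chooses cocycle representatives of a basis of $V_0\subseteq L_X$ in any Lie model of $X$ and extends via the universal property of free Lie algebras to obtain a DGL map $(\mathbb{L}(V_0),0)\to(\text{Lie model of }X)$ that is the identity on cohomology. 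This is a DGL quasi-isomorphism, so $(L_X,0)$ is a Lie model of $X$ and $X$ is coformal.

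First I would use the TNHZ condition, which is equivalent to the splitting of the long exact sequence of rational homotopy groups of the fibration into short exact sequences. Looping once yields a short exact sequence of graded Lie algebras
\[
0\longrightarrow L_F\longrightarrow L_E\longrightarrow L_B\longrightarrow 0,
\]
exhibiting $L_F$ as a sub-Lie-algebra (in fact, a Lie ideal) of $L_E$. Since $E$ is coformal, by the characterization above $L_E$ is a free graded Lie algebra over $\mathbb{Q}$.

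The crux of the argument is to invoke the graded version of the Shirshov--Witt theorem: every sub-Lie-algebra of a free graded Lie algebra over a field of characteristic zero is itself free. Applied to the inclusion $L_F\subseteq L_E$, this yields that $L_F$ is free; hence $F$ is coformal by the characterization above. The main potential obstacle is the precise invocation of the graded Shirshov--Witt theorem in the $\mathbb{Z}$-graded (Lie superalgebra) setting, which requires attention to the Koszul sign conventions; in characteristic zero this is classical (following, e.g., from the Lie superalgebra version due to Mikhalev) and applies without issue. Alternatively, one could attempt a purely Sullivan-model argument using Lemma \ref{TNHZlemma} to obtain the relative minimal Sullivan model $(\Lambda V_B,d_B)\to(\Lambda V_B\otimes\Lambda W,d)\to(\Lambda W,\bar d)$ and then, using Lemma \ref{coformal=limitlemma} and the coformality of $E$, descending the isomorphism between $(\Lambda V_B\otimes\Lambda W,d)$ and its coformal limit to an isomorphism on the quotient modeling $F$; the technical heart there is arranging the isomorphism to be compatible with the $V_B$-extension structure, which makes the Lie-algebraic approach cleaner.
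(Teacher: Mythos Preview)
Your proof rests on the claim that a simply connected space $X$ is coformal if and only if its homotopy Lie algebra $L_X$ is a \emph{free} graded Lie algebra. This characterization is false, and the error is in the forward direction. Coformality means that $(L_X,0)$ is \emph{a} Lie model of $X$; it does \emph{not} mean that the minimal Quillen model has zero differential. The minimal Quillen model is always of the form $(\mathbb{L}(V),d)$ with $V\cong s^{-1}\widetilde{H}_\ast(X;\mathbb{Q})$, and $d$ encodes the coalgebra structure on homology; coformality only says that $(\mathbb{L}(V),d)$ is quasi-isomorphic to $(L_X,0)$, not that $d=0$ or that $L_X\cong\mathbb{L}(V)$. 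A concrete counterexample is $X=S^3\times S^3$: its Sullivan model $(\Lambda(x_3,y_3),0)$ is purely quadratic, so $X$ is coformal, yet $L_X$ is the two-dimensional abelian Lie algebra concentrated in degree~$2$, which is not free (the free Lie algebra on two even generators contains a nonzero bracket in degree~$4$). Similarly $S^2\times S^2$ is coformal with non-free $L_X$. Since the forward implication fails, the Shirshov--Witt step never gets off the ground and the argument collapses.

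The alternative you sketch at the end---using Lemma~\ref{TNHZlemma} to obtain a minimal relative Sullivan model $(\Lambda V_B,d)\to(\Lambda V_B\otimes\Lambda W,d)\to(\Lambda W,\bar d)$, and then descending the isomorphism between $(\Lambda V_B\otimes\Lambda W,d)$ and its coformal limit to the quotient---is exactly the route the paper takes. The ``technical heart'' you flag, arranging compatibility with the extension structure, is handled by observing that the composite isomorphism $\psi=\varphi_1^{-1}\circ\varphi$ from $(\Lambda V_B\otimes\Lambda W,d)$ to $(\Lambda V_B\otimes\Lambda W,d_1)$ has linear part equal to the identity, hence preserves the ideal $\Lambda^{+}V_B\otimes\Lambda W$ and passes to the desired isomorphism on the fiber model. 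That is the argument you should develop; the Lie-algebraic shortcut does not work.
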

\begin{proof}
Let $(\Lambda V_B, d)$ be a minimal model of $B$.
By the similar argument in Lemma \ref{fibmodellemma}, the fibration has a model of the form 
\[
(\Lambda V_B, d)\stackrel{\widehat{p}}{\longrightarrow} (\Lambda V_B\otimes \Lambda V_F, d)\stackrel{\widehat{i}}{\longrightarrow}(\Lambda V_F, \bar{d}),
\]
where $(\Lambda V_B\otimes \Lambda V_F, d)$ is a minimal model of $E$. To show $F$ is coformal, it suffices to show that $(\Lambda V_F, \bar{d})$ is isomorphic to a purely quadratic Sullivan algebra.

Since $E$ is coformal, it has a minimal Sullivan model $C^\ast (L_E, 0)$. By the uniqueness of minimal Sullivan model, there is an isomorphism of minimal Sullivan algebras $
\varphi: (\Lambda V_B\otimes \Lambda V_F, d)\stackrel{}{\rightarrow} C^\ast (L_E, 0)$, which induces an isomorphism of minimal Sullivan algebras $\varphi_1: (\Lambda V_B\otimes \Lambda V_F, d_1)\stackrel{}{\rightarrow} C^\ast (L_E, 0)$, where $(\Lambda V_B, d_1)$ is a Sullivan subalgebra of $(\Lambda V_B\otimes \Lambda V_F, d_1)$. Hence, there is an isomorphism of minimal Sullivan algebras 
\[
\psi:=\varphi_1^{-1}\circ \varphi:  (\Lambda V_B\otimes \Lambda V_F, d)\stackrel{}{\longrightarrow}  (\Lambda V_B\otimes \Lambda V_F, d_1),
\]
whose linear part $Q\psi$ is the identity. Then it is clear that $\psi$ sends the differentiable ideal $\Lambda^{+} V_B\otimes \Lambda V_F$ isomorphically onto the ideal $\Lambda^{+} V_B\otimes \Lambda V_F$ of $(\Lambda V_B\otimes \Lambda V_F, d_1)$, and it induces an isomorphism $\bar{\psi}: (\Lambda V_F, \bar{d})\stackrel{}{\longrightarrow} (\Lambda V_F, \bar{d}_1)$ whose linear part is identity. Hence $F$ is coformal and the proposition is proved.
\end{proof}

\begin{example}[TNHZ is necessary]\label{counterex1}
Consider the fiber bundle 
\[
S^2\stackrel{}{\longrightarrow} \mathbb{C}P^3\stackrel{}{\longrightarrow} S^4
\]
defined in \cite[Section 1.1]{HBJ}. $S^2$ and $S^4$ are coformal. The coformal limit of $\mathbb{C}P^3$ is $S^2\times S^7$, and hence has rational category equal to $2$. However, $\mathbb{C}P^3$ is not coformal, and the fiber bundle is not TNHZ. This examples shows that TNHZ condition is necessary in Theorem \ref{coformalmainthm}.
\end{example}

\begin{example}[Category condition is optimal]\label{counterex3}
Consider the rational fibration $S^9\stackrel{i}{\longrightarrow}E\stackrel{p}{\longrightarrow} S^2\times S^2$ determined the extension of minimal Sullivan algebras
\[
\begin{split}
&(\Lambda (x_2, y_3, a_2, b_3), dy_3=x_2^2, db_3=a_2^2) \\
 & \stackrel{\widehat{p}}{\longrightarrow}(\Lambda (x_2, y_3, a_2, b_3, s_9), dy_3=x_2^2, db_3=a_2^2, ds_9=a_2x_2b_3y_3)  \stackrel{\widehat{i}}{\longrightarrow}  (\Lambda s_9, 0),
\end{split}
\]
where the subscripts indicate the degree of the generators. We claim that $E$ is not coformal. Indeed suppose otherwise $E$ is coformal. Then $E$ is rational homotopy equivalent to its coformal limit $E^\prime \simeq_{\mathbb{Q}} S^2\times S^2\times S^9$. It implies that there is an isomorphism of minimal Sullivan algebras
\[
\begin{split}
&\varphi:(\Lambda (x_2, y_3, a_2, b_3, s_9), dy_3=x_2^2, db_3=a_2^2, ds_9=a_2x_2b_3y_3)\\
&\stackrel{}{\longrightarrow}(\Lambda (\widetilde{x}_2, \widetilde{y}_3, \widetilde{a}_2, \widetilde{b}_3, \widetilde{s}_9), d\widetilde{y}_3=\widetilde{x}_2^2, d\widetilde{b}_3=\widetilde{a}_2^2, d\widetilde{s}_9=0).
\end{split}
\]
By degree reason, $\varphi(s_9)=k_0\widetilde{s}_9+ \widetilde{x}_2^2\widetilde{a}_2(k_1\widetilde{y}_3+k_2\widetilde{b}_3)+ \widetilde{a}_2^2\widetilde{x}_2(k_3\widetilde{y}_3+k_4\widetilde{b}_3)$ for some $k_i\in \mathbb{Q}$. Applying the differential, we have $\varphi(a_2x_2b_3y_3)=\widetilde{x}_2^2\widetilde{a}_2(k_1\widetilde{x}_2^2+k_2\widetilde{a}_2^2)+ \widetilde{a}_2^2\widetilde{x}_2(k_3\widetilde{x}_2^2+k_4\widetilde{a}_2^2)$, which is impossible. Hence $E$ is not coformal with $E^\prime \simeq_{\mathbb{Q}} S^2\times S^2\times S^9$. In particular, $cat_0(E^\prime)=3$. However the fiber $S^2$ and the base $S^2\times S^2$ are coformal and the fibration is TNHZ. This shows that the condition $cat_0(E^\prime)\leq 2$ in Theorem \ref{coformalmainthm} is optimal.
\end{example}

\begin{example}[Converse statement is false]\label{counterex2}
Consider the rational fibration $S^3\stackrel{i}{\longrightarrow}E\stackrel{p}{\longrightarrow} \mathbb{C}P^2$ determined the extension of minimal Sullivan algebras
\[
(\Lambda (x_2, y_5), dy_5=x_2^3) \stackrel{\widehat{p}}{\longrightarrow} (\Lambda (x_2, y_5, a_3), dy_5=x_2^3, da_3=x_2^2)  \stackrel{\widehat{i}}{\longrightarrow}  (\Lambda a_3, 0),
\]
where the subscripts indicate the degree of the generators. We claim that $E$ is rational homotopy equivalent to $S^2\times S^5$. Indeed, in the model $(\Lambda (x_2, y_5, a_3), dy_5=x_2^3, da_3=x_2^2)$ we see that $d(y_5-x_2a_3)=0$. Then we can define an isomorphism of minimal Sullivan algebras
\[
\varphi: (\Lambda (x_2, y_5, a_3), dy_5=x_2^3, da_3=x_2^2) \stackrel{}{\longrightarrow}(\Lambda (\widetilde{x}_2, \widetilde{y}_5, \widetilde{a}_3), d\widetilde{y}_5=0, d\widetilde{a}_3=\widetilde{x}_2^2), 
\]
by $\varphi(x_2)=\widetilde{x}_2$, $\varphi(a_3)=\widetilde{a}_3$, and $\varphi(y_5)=\widetilde{y}_5+\widetilde{x}_2\widetilde{a}_3$. Since the geometric realization of $(\Lambda (\widetilde{x}_2, \widetilde{y}_5, \widetilde{a}_3), d\widetilde{y}_5=0, d\widetilde{a}_3=\widetilde{x}_2^2)$ is $S^2\times S^5$, we have showed the claim that $E\simeq_{\mathbb{Q}} S^2\times S^5$. In particular $E$ is coformal and $cat_0(E)=2$. It is also clear that the fibration is TNHZ and the fiber $S^3$ is coformal. However, the base $\mathbb{C}P^2$ is not coformal. This example shows that the coformality of the total space does not imply the coformality of the base space.
\end{example}


\bibliographystyle{amsalpha}

\end{document}